\documentclass[11pt]{amsart}
\usepackage{amssymb,amsmath,amsthm,amscd}
\usepackage[all]{xy}
\usepackage{color}
\addtolength{\textwidth}{105pt} \addtolength{\textheight}{45pt}
\addtolength{\hoffset}{-50pt} \addtolength{\voffset}{-15pt}

%\usepackage{showkeys}
%\pagestyle{empty}
%\CompileMatrices

%%%%%%%%%%%%%%%%%%%%%%%%%%%%%%%%%%%%%%%%%%%%%%%%%%%%%%%%%%

\numberwithin{equation}{section}

%%%%%%%%%%%%%%%%%%%%%%%%%%%%%%%%%%%%%%%%%%%%%%%%%%%%%%%%

% theorems, some fancy :)
\newtheoremstyle{fancy1}{10pt}{10pt}{\itshape}{12pt}{\textsc\bgroup}{.\egroup}{8pt}{
}
\newtheoremstyle{fancy2}{10pt}{10pt}{}{12pt}{\itshape}{.}{8pt}{ }

\theoremstyle{fancy1}

\newtheorem{lem}[equation]{Lemma}

\newtheorem*{main*}{Theorem}

\newtheorem*{cor*}{Corollary}

\setcounter{table}{\value{main}}

\theoremstyle{fancy2}

\newtheorem*{rem*}{Remark}

%%%%%%%%%%%%%%%%%%%%%%%%%%%%%%%%%%%%%%%%%%%%%%%%%%%%%%%%%%%%%%%%

\newcommand{\cref}[1]{Corollary~\ref{#1}}

\newcommand{\lref}[1]{Lemma~\ref{#1}}

%%%%%%%%%%%%%%%%%%%%%%%%%%%%%%%%%%%%%%%%%%%%%%%%%

\newcommand{\Sph}{\mathbb{S}}

\newcommand{\gS}{\mathsf{S}}
\newcommand{\gC}{\mathsf{C}}
\newcommand{\gH}{\mathsf{H}}
\newcommand{\gK}{\mathsf{K}}

%\newcommand{\L}{\mathsf{L}}

%%%%%%%%%%%%%%%%%%%%%%%%%%%%%%%%%%%%%%%%%%%%%%%%%

%Greek letters

\DeclareMathOperator{\diag}{diag}

\DeclareMathOperator{\SO}{SO}
\DeclareMathOperator{\Spin}{Spin}

\DeclareMathOperator{\ad}{ad}

\newcommand{\U}{\mathsf{U}}
%\newcommand{\GL}{\mathsf{GL}}
%\newcommand{\SL}{\mathsf{SL}}
%\newcommand{\Or}{\mathsf{O}}
%\newcommand{\SO}{\mathsf{SO}}
%%%%%%%%%%%% Lie algebras %%%%%%%%%%%%%%%%%%
\newcommand{\Lk}{\mathfrak{k}}

\newcommand{\su}{\mathfrak{su}}

\newcommand{\so}{\mathfrak{so}}
\newcommand{\symp}{\mathfrak{sp}}

\newcommand{\Sp}{\mathsf{Sp}}

\newcommand{\SU}{\mathsf{SU}}

%
%%%%%%%%%%    Further Abbreviations :%%%%%%%%%%%%%%%%%%%%%%%%%%%%%%%%%%
%
\DeclareMathOperator{\Fix}{Fix}

\DeclareMathOperator{\Ad}{Ad}

\DeclareMathOperator{\pr}{pr}

\DeclareMathOperator{\rank}{rank}

\newcommand{\ml}{\langle}                     % Riemannian metric (left )
\newcommand{\mr}{\rangle}                     % Riemannian metric (right)

\newcommand{\Lp}{\mathfrak{p}}
\newcommand{\Lq}{\mathfrak{q}}

%%%%%%%%%%%%%%%%%%%%%%%%%%%%%%%%%%%%%%%%%%%%%%%%%

%complex proj. space, quat. proj space,  Cayley plane
\newcommand{\RP}{\mathbb{R\mkern1mu P}}
\newcommand{\CP}{\mathbb{C\mkern1mu P}}
\newcommand{\HP}{\mathbb{H\mkern1mu P}}
\newcommand{\CaP}{\mathrm{Ca}\mathbb{\mkern1mu P}^2}
%\newcommand{\S}{\mathbb{S}}

%%%%%%%%%%%%%%%%%%%%%%%%%%%%%%%%%%%%%%%%%%%%%%%%%

% complex, real, integers

\newcommand{\C}{{\mathbb{C}}}
\newcommand{\R}{{\mathbb{R}}}
\newcommand{\Z}{{\mathbb{Z}}}

%\renewcommand{\H}{{\mathbb{H}}}

%%%%%%%%%%%%%%%%%%%%%%%%%%%%%%%%%%%%%%%%%%%%%%%%%

% Lie groups

%\newcomm

%%%%%%%%%%%%%%%%%%%%%%%%%%%%%%%%%%%%%%%%%%%%%%%%%

%Lie algebras

\newcommand{\Lh}{{\mathfrak{h}}}
\newcommand{\gL}{{\mathsf{L}}}
\newcommand{\gU}{{\mathsf{U}}}

%%%%%%%%%%%%%%%%%%%%%%%%%%%%%%%%%%%%%%%%%%%%%%%%%

% mathematical operators

\DeclareMathOperator{\Fi}{Fix(\iota)}
\DeclareMathOperator{\Fis}{Fix(\iota_2)}
\DeclareMathOperator{\id}{id}
\DeclareMathOperator{\Hi}{\gH^\iota}

\newcommand{\gE}{\mathsf{E} }
\newcommand{\gF}{\mathsf{F} }
\newcommand{\gT}{\mathsf{T} }
\newcommand{\gG}{\mathsf{G} }

%%%%%%%%%%%%%%%%%%%%%%%%%%%%%%%%%%%%%%%%%%%%%%%%%

% useful ones

%%%%%%%%%%%%%%%%%%%%%%%%%%%%%%%%%%%%%%%%%%%%%%%%%

%%%%%

\begin{document}
\date{\today}

\title{Revisiting homogeneous spaces with positive curvature}

\author{Burkhard Wilking}
\address{University of M\"{u}nster\\
       Einsteinstrasse 62\\
       48149 M\"{u}nster, Germany}
\email{wilking@math.uni-muenster.de}

\author{Wolfgang Ziller}
\address{University of Pennsylvania\\
       Philadelphia, PA 19104}
\email{wziller@math.upenn.edu}
\thanks{ The authors were supported by grants from the
DFG and NSF}

\maketitle

%-------------- Article Text--------------------

The classification of compact simply connected homogeneous spaces of positive curvature is now almost 40 years old. It has been accomplished in a series of papers by M.~Berger, N.~Wallach, Aloff-Wallach,
 and L.~B\'erard Bergery \cite{Be,Wa,AW,BB}, with an omission in \cite{Be} as observed in \cite{Wi1}.
   As was recently observed by J. Wolf and M. Xu \cite{WX}, there is a gap in
B\'erard Bergery's classification of odd dimensional positively curved  homogeneous spaces in
the case of the Stiefel manifold $\Sp(2)/\U(1)=\SO(5)/\SO(2)$.
 Since this classification has been used in several other  papers, for example, in the classification of
 positively curved cohomogeneity one manifolds in \cite{GWZ} and
  positively curved polar manifolds in \cite{FGT}, it seems desirable to correct this situation.
  We thus present here a modern complete and self contained proof of the
   classification, confirming that there are indeed no new examples.
   To be more precise we will reprove the following

   \begin{main*} Suppose a compact connected Lie group $\bar{\gK}$ acts isometrically, effectively and
   transitively on a simply connected manifold of positive sectional curvature
   with stabilizer group $\bar{\gH}$. Then the pair  $(\bar{\gK},\bar{\gH})$ is isomorphic to $(\gK/\gC,\gH/\gC)$
   for one of the triples $(\gK,\gH,\gC)$  in
   the Tables A or B.
   \end{main*}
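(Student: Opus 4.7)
The plan is to translate the problem to the Lie algebra level. Up to covering, the pair $(\bar\gK,\bar\gH)$ is encoded by the Lie algebras $(\fk,\fh)$ together with an $\Ad(\gH)$-invariant reductive decomposition $\fk=\fh\oplus\fm$ and an $\Ad(\gH)$-invariant inner product on $\fm$. Since $M$ is simply connected, I would pass to a connected, simply connected cover $\gK$ of $\bar\gK$; the kernel $\gC$ of the resulting action on $\gK/\gH$ is then central, and quotienting by $\gC$ recovers the original effective action. Thus the classification reduces to enumerating triples $(\gK,\gH,\gC)$ with $\gC$ central in $\gK$, which is exactly the format of Tables~A and~B.

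Next I would apply Berger's rank argument. Choose a maximal torus of $\gH$ and extend it to a maximal torus $\gT \subseteq\gK$; set $\fa=\Lie(\gT)\cap\fm$. If $\dim\fa\geq 2$, any two independent elements of $\fa$ commute and their $\fm$-component of the bracket vanishes, so the Wallach/Berger flat-plane observation produces a zero curvature plane, contradicting positive sectional curvature. This forces the dichotomy $\rank\gK-\rank\gH\in\{0,1\}$, splitting the classification into equal-rank (even-dimensional) and corank-one (odd-dimensional) cases. The same zero-plane criterion — that commuting elements $X\in\fm$ and $Y\in\fk$ need $[X,Y]_\fm\neq 0$ to avoid a flat — will be the workhorse throughout.

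The third step is the case analysis. When $\gK$ is simple, Borel--de~Siebenthal combined with the known lists of maximal connected subgroups of compact simple Lie groups enumerates, up to conjugacy, all pairs $(\gK,\gH)$ with $\rank\gH\geq\rank\gK-1$. For each candidate I would write out the isotropy representation on $\fm$, consider an $\Ad(\gH)$-invariant family of metrics (often obtained by submersion from a bi-invariant metric on a larger group), and test positive curvature against the zero-plane obstruction. This should recover the rank-one symmetric spaces, the Wallach flag manifolds $\Sp(3)/\Sp(1)^3$, $\SU(3)/T^2$ and $\gF_4/\Spin(8)$, the Aloff--Wallach family, and the Berger spaces. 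When $\gK$ is not simple, the projections of $\gH$ to each simple factor are strongly constrained — either a single factor is already transitive or $\gH$ has a diagonal character across factors — which, together with the rank bound, leaves only a handful of products.

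The step I expect to be hardest is the odd-dimensional case, where $\gH$ has corank one in $\gK$. Here the embeddings $\fh\hookrightarrow\fk$ typically vary in continuous families parameterized by slopes of circle subgroups of a maximal torus, and for each slope one must decide whether a positively curved invariant metric exists. This is exactly where the gap noted by Wolf--Xu \cite{WX} lives, at the Stiefel example $\Sp(2)/\U(1)$: a one-parameter family of $\U(1)$-subgroups of $\Sp(2)$ has to be sifted, and one slope was inadvertently missed in \cite{BB}. I would handle this by writing the sectional curvatures of a general invariant metric as explicit functions of both the slope and the metric parameters, then extracting sharp zero-curvature obstructions from commuting abelian subalgebras of $\fm$ to determine exactly which slopes can admit positive curvature. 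Verifying that in every case the surviving configurations already appear in Tables~A and~B, and that no genuinely new example arises, will complete the proof.
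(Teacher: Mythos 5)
Your outline captures the right high-level structure (pass to a simply connected cover, split by rank, case-check against the tables), but the workhorse obstruction you propose is too weak, and this is precisely where the classification historically went wrong.

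You state the zero-plane criterion as: commuting elements $X\in\fm$, $Y\in\fk$ with $[X,Y]_{\fm}=0$ force a flat plane. That is false for a general $\Ad(\gH)$-invariant metric. Writing the metric as $\langle \cdot,\cdot\rangle = Q(G\cdot,\cdot)$ for a biinvariant $Q$ and a positive $\Ad(\gH)$-equivariant endomorphism $G$ of $\fm$, the curvature formula contains terms like $B_+(x,y)=\tfrac12([x,Gy]+[Gx,y])$ and $B_+(x,x)=[x,Gx]$, which do not vanish when $[x,y]=0$ unless $x$ and $y$ are actually eigenvectors of $G$. So ``commuting abelian subalgebras of $\fm$'' give flat planes only in the diagonal (eigenvector) direction, which is the content of Lemma~\ref{ev}(a) in the paper. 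This matters: the paper's own Final Remarks record that for $\Sp(2)/\diag(z,z)$, Wolf and Xu exhibited invariant metrics for which \emph{every} plane spanned by two commuting vectors has positive curvature. Your plan to extract ``sharp zero-curvature obstructions from commuting abelian subalgebras'' would therefore be unable to close exactly the cases you flag as hardest. The paper needs the sharper Lemma~\ref{ev}(b) (pair a smallest-eigenvalue eigenvector with $G^{-1}$ of a commuting vector to get a non-positive plane) to kill $\Sp(2)/\diag(z,z)$, and a separate symmetry/Synge argument (reduce to $\mathrm{Pin}(2)$-invariant metrics and observe the quotient is non-orientable) to kill $\Sp(2)/\diag(z^3,z)$. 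Your Berger rank argument has the same eigenvector gap: two independent elements of $\ft\cap\fm$ need not be $G$-eigenvectors.

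Separately, your enumeration via Borel--de~Siebenthal and maximal-subgroup lists is a genuinely different route from the paper's. The paper runs an induction on $\dim\gK$: for an involution $\iota\in\gH$, the fixed component $\Fix(\iota)_0$ is totally geodesic and homogeneous under $C(\iota)_0$, so by induction $C(\iota)_0/\gH^\iota$ must itself appear in the tables; combined with a ``Block Lemma'' that handles all large classical groups uniformly, this systematically drives the classification downward. Direct enumeration is workable in principle but much harder to carry out and book-keep, especially in the odd-dimensional non-simple cases.
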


\renewcommand{\arraystretch}{1.2}

\begin{table}[!h]
      \begin{center}
          \begin{tabular}{|c|c|c|c|c|}
\hline $\gK$          &$\gH$  &$\gK/\gH$& Kernel $\gC$& $N(\gH)/\gH$\\
\hline \hline

 $\SU(5)$ &$\Sp(2)\cdot\gS^1$ &$B^{13}$&$\Z_5$& $\{e\}$  \\
\hline
 $\Sp(2)$&$\Sp(1)_{max}$& $B^7$&$\Z_2$&$\{e\}$\\
\hline
 $\SU(3)\times \SO(3)$&$\U(2)$& $W^7_{1,1}$&$\Z_3$&$\{e\}$\\
\hline

$\SU(3)$ &$\gT^2$ & $W^6$&$\Z_3$& $\gS_3$ \\
\hline

 $\Sp(3)$ &$\Sp(1)^3$ & $W^{12}$&$\Z_2$& $\gS_3$ \\
\hline

 $\gF_4$ &$\Spin(8)$ &$W^{24}$&$\{e\}$& $\gS_3$ \\
\hline

 $\SU(3)$ &$\gS^1=\diag(z^p,z^q,\bar{z}^{p+q})$& $W^7_{p,q} $& $\Z_3$ if $p\equiv q$ mod $3$& $\gS^1$ if $p\neq q$ \\
  & $p\ge q\ge 1$, $\gcd(p,q)=1$ & &\{e\} if $p\not \equiv q$ mod $3$& $\SO(3)$ if $p=q$\\ \hline

 $\U(3)$ & $\gT^2(p,q)$ & $W^7_{p,q}$& $\Z_{p+2q}$& $\gS^1$  \\
\hline
          \end{tabular}
      \end{center}
      \vspace{0.1cm}
      \begin{center}
      \begin{minipage}{11.5cm}
     {Table A: Homogeneous spaces $M^n=\gK/\gH$ with positive sectional
curvature,  which are  not
 diffeomorphic to rank one symmetric spaces.}
\end{minipage}
\end{center}\label{homog}
\end{table}

As far as the embeddings of $\gH$ in $\gK$ are concerned, $\Sp(2)\gS^1$ is the normalizer of $\Sp(2)\subset \SU(5)$
embedded by the four dimensional representation,
$\Sp(1)_{max}$ is the unique 3 dimensional maximal subgroup of $\Sp(2)$, and in the third example $\gU(2)$ is the normalizer of $\Delta \SU(2)\subset \SU(2)\times \SO(3)\subset \gK$.
The last example is just an $\gS^1$ extension of the previous one and they are
the Aloff Wallach spaces \cite{AW}, with $\gT(p,q):=\{\diag(z^p\zeta,z^q,\bar z^{p+q})\mid z,\zeta \in \gS^1\}$
for $p\ge q\ge 1$, $\gcd(p,q)=1$.

The first three  examples are the only ones in the list  which admit
a normal homogeneous metric of positive sectional curvature.
The first two of those were discovered by Berger \cite{Be}, while the third was added in \cite{Wi1}
and is diffeomorphic to the Aloff Wallach space $W_{1,1}$.
  The Wallach flag manifolds $W^6$, $W^{12}$ and $W^{24}$ are
the only even dimensional positively curved simply connected homogeneous spaces
apart from the rank one symmetric space, \cite{Wa}.
We will not reprove that these spaces have positive curvature (see e.g. \cite{Zi} for details).
The group $\gC$ is defined as the intersection
of $\gH$ with the center of $\gK$ and corresponds to the kernel of the action of $\gK$ on $\gK/\gH$.
The normalizer $N(\gH)$ of $\gH$ in $\gK$ can be determined by a standard computation
and we kept track  of the isomorphism type of
$N(\gH)/\gH$ in the last column, where $\gS_3$ stands for the permutation group with $6$ elements.

\begin{table}[!h]
      \begin{center}
          \begin{tabular}{|c|c|c|c|c|}
\hline $\gK$          &$\gH$ &$\gK/\gH$&Kernel $\gC$ &$N(\gH)/\gH$           \\
\hline \hline

$\SO(n+1)$ &$\SO(n)$ &$\Sph^{n}$ &$\{e\}$& $\Z_2$ (for $n\ge 2$)\\
\hline

$\SU(n+1)$ & $\SU(n)$& $\Sph^{2n+1}$& $\{e\}$ &$\gS^1$ (for $n\ge 2$)  \\
\hline

$\U(n+1)$ & $\U(n)$ & $\Sph^{2n+1}$ &$\{e\}$ & $\gS^1$ \\
\hline

$\Sp(n+1)$ & $\Sp(n)$ &  $\Sph^{4n+3}$& $\{e\}$ & $\gS^3$ \\
\hline

$\Sp(n+1)\Sp(1)$ & $\Sp(n)\Delta\Sp(1)$& $\Sph^{4n+3}$&$\Delta \Z_2$& $\Z_2$
\\
\hline

$\Sp(n+1)\U(1)$ & $\Sp(n)\Delta\U(1)$&$\Sph^{4n+3}$& $\Delta \Z_2$&$\gS^1$
 \\
\hline

$\Spin(9)$   &$\Spin(7)$& $\Sph^{15}$& $\{e\}$ & $\Z_2$  \\
\hline

$\Spin(7)$   &$\gG_2$ &$\Sph^7$  & $\{e\}$ & $\Z_2$ \\
\hline

$\gG_2$   &$\SU(3)$  &$\Sph^6$  &$\{e\}$  & $\Z_2$\\
\hline
\hline

$\SU(n+1)$ &$\U(n)$  & $\CP^n$ & $\Z_{n+1}$& $\{e\}$ (for $n\ge 2$)\\
\hline
$\Sp(n+1)$ &$\Sp(n)\Sp(1)$ &$\HP^n$ &$\Z_2$& $\{e\}$ (for $n\ge 2$) \\
\hline

$\Sp(n+1)$ &$\Sp(n)\U(1)$  & $\CP^{2n+1}$&$\Z_2$& $\Z_2$ \\
\hline

$\gF_4$ &$\Spin(9)$ & $\CaP$ &$\{e\}$& \{e\} \\
\hline

\hline
          \end{tabular}
      \end{center}
      \vspace{0.1cm}
      \begin{center}
      \begin{minipage}{10cm}
     {Table B: Transitive actions on rank 1 symmetric spaces.}
\end{minipage}
\end{center}

\end{table}

Here we should add that $\Spin(8)$ acts in three different ways
transitively on $\Sph^7$, but up to outer automorphisms of $\Spin(8)$
the actions are isomorphic.

 A non simply connected homogeneous space arises from a simply connected space
$\gK/\gH$ by replacing $\gH$ by an extension $\hat{\gH}\subset N(\gH)$.
Thus the following corollary can be viewed as the classification
 of non simply connected homogeneous spaces of positive sectional curvature.

\begin{cor*} Let $(\gK,\gH,N(\gH)/\gH)$ be one of the triples in Tables A or B.
If $\gK/\gH$ is not a Wallach flag manifold, then
all finite subgroups of $N(\gH)/\gH$ give rise to a finite
extension $\hat{\gH}$ of $\gH$ such that some positively curved metric descends to
a $\gK$ invariant metric of
$\gK/{\hat\gH}$.
If $\gK/\gH$ is a Wallach flag manifold, then only the
$\Z_2$-extensions of $\gH$ allow for
some positively curved metric to descend.
\end{cor*}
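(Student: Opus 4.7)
The approach is to translate descent into an invariance condition on the isotropy complement. If $\fm \subset \fk$ is a reductive complement to $\fh$, then $\gK$-invariant metrics on $\gK/\gH$ correspond bijectively to $\Ad(\gH)$-invariant inner products on $\fm$, and such a metric descends to a $\gK$-invariant metric on $\gK/\hat\gH$ precisely when the right action of $\hat\gH$ on $\gK/\gH$ is isometric, equivalently when the inner product on $\fm$ is $\Ad(\hat\gH)$-invariant. Since $\hat\gH \subset N(\gH)$, this reduces to asking which finite subgroups $F = \hat\gH/\gH \subset N(\gH)/\gH$ preserve some positively curved invariant inner product on $\fm$. I would then split into two cases according to whether $\gK/\gH$ is a Wallach flag manifold or not.

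For the three Wallach flag manifolds $W^6, W^{12}, W^{24}$, the isotropy representation decomposes as a sum of three pairwise inequivalent irreducible summands $\fm = \fm_1 \oplus \fm_2 \oplus \fm_3$, so every invariant metric is a diagonal $g = t_1 g_1 + t_2 g_2 + t_3 g_3$ with $g_i$ the restriction of the bi-invariant form. Wallach's curvature computation (see \cite{Wa,Zi}) shows positive sectional curvature holds exactly under a strict triangle-type inequality in the $t_i$, and in particular fails at the normal point $t_1 = t_2 = t_3$, where zero-curvature $2$-planes appear. The normalizer quotient $N(\gH)/\gH \cong \gS_3$ acts on $\fm$ by permuting the three summands: as the Weyl group for $W^6$, by permutation of the three $\Sp(1)$-factors for $W^{12}$, and through the triality of $\Spin(8)$ inside $\gF_4$ for $W^{24}$. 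Hence $F$-invariance forces $(t_1,t_2,t_3)$ to be constant on the $F$-orbits in $\{1,2,3\}$; a transposition (the $\Z_2$-subgroups) merely collapses two coordinates, leaving an open family of positively curved metrics, while any subgroup containing a $3$-cycle (i.e.\ $\Z_3$ or $\gS_3$) forces $t_1 = t_2 = t_3$ and rules out positive curvature.

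For all remaining entries of Tables A and B, the strategy is to exhibit a single positively curved $\gK$-invariant metric already invariant under the full connected normalizer $N(\gH)^0$, from which invariance under every finite subgroup is automatic. The rank-one symmetric spaces in Table B carry bi-invariant round, Fubini-Study, quaternionic Kähler or Cayley metrics, all manifestly $N(\gH)^0$-invariant, and their Berger squashings along the Hopf $\gS^1$ or $\gS^3$ are invariant in the squashing direction itself. For $B^7$ and $B^{13}$ as well as the third entry $\SU(3)\times\SO(3)/\gU(2) \cong W^7_{1,1}$, the normalizer quotient is trivial. For the Aloff-Wallach spaces $W^7_{p,q}$ with $p \ne q$, the positively curved metrics of \cite{AW} are $\Ad(\gT^2)$-invariant, and $\gT^2$ is precisely the identity component of $N(\gH)$; the case $p=q$ is already covered by the Wilking presentation. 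The main obstacle is the flag case, and more specifically verifying that the $\gS_3$-action on the three isotropy summands of $W^{24}$ is actually realized by elements of $\gF_4$ normalizing $\Spin(8)$: once one identifies these with the triality automorphisms of $\Spin(8)$, all of which lift into $\gF_4$, the symmetry argument above concludes the proof.
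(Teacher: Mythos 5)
Your overall framework is correct, and for the non-Wallach cases your strategy (exhibit a positively curved metric invariant under the full normalizer) is exactly what the paper does. But there is a notational/conceptual slip worth flagging: you write that the metric should be invariant under the \emph{connected} normalizer $N(\gH)^0$. That is strictly weaker than what is needed, since the finite subgroups of $N(\gH)/\gH$ which give the extensions $\hat\gH$ generally live in non-identity components of $N(\gH)$ — indeed for $\SO(n+1)/\SO(n)$ one has $N(\gH)^0=\gH$, so $N(\gH)^0$-invariance says nothing at all. The paper states and uses $\Ad_{N(\gH)}$-invariance, and that is what your argument actually needs (and what the normal metric, Berger squashings, etc., in fact satisfy). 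So replace $N(\gH)^0$ by $N(\gH)$ throughout.

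For the Wallach flag manifolds your route is genuinely different from the paper's. You invoke the precise positivity locus of the diagonal metrics $(t_1,t_2,t_3)$ — that the $\gS_3$-action permutes the three inequivalent isotropy summands, that a transposition only forces $t_i=t_j$ (which still leaves an open set of positively curved metrics), and that a $3$-cycle forces $t_1=t_2=t_3$, the normal metric, which is known to have flat planes. This works, but it leans on Wallach's explicit curvature computation and on the specific (and somewhat delicate) shape of the positivity locus, which your phrase ``a strict triangle-type inequality'' does not accurately capture (the all-equal point satisfies the strict triangle inequality, yet fails). The paper instead rules out all subgroups of order $>2$ by the Synge Lemma: an even-dimensional positively curved manifold has $\pi_1$ trivial or $\Z_2$, full stop, with no reference to the explicit metrics. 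That argument is cleaner, covers all three Wallach flag manifolds uniformly, and does not require checking that the $\gS_3$-action is literally realized by elements of $\gK$ normalizing $\gH$ (it only needs the topological quotient to exist). Your symmetry argument gives essentially the same conclusion, at the cost of a sharper input; it is an instructive alternative but the Synge route is more economical. Finally, both your argument and the paper implicitly use the converse direction — that some positively curved metric with $t_i=t_j$ exists, so the $\Z_2$-extension really does occur — which is true and worth stating explicitly.
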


Using the description in \cite{Wi1} of the  $\SU(3)$-equivariant principal bundle
$\SO(3)\rightarrow W_{1,1}^7\rightarrow \CP^2$.
Shankar \cite{Sh1} was the first to observe
that any finite subgroup $\gF\subset \SO(3)$ can be realized
as fundamental group of a positively curved  homogeneous space $W_{1,1}^7/\gF$.

Similarly one can use the $\Sp(n+1)$
equivariant $\gS^3$-principal bundle $\gS^3 \rightarrow \Sph^{4n+3}\rightarrow \HP^n$
to realize any finite subgroup $\gF\subset \gS^3$ as fundamental groups of a homogeneous space form
$\Sph^{4n+3}/\gF$.

By the corollary it is clear that
any positively curved homogeneous spaces with a noncyclic fundamental group
must be equivariantly diffeomorphic to one of these.
We should mention that the full isometry group of the examples in Table A  was determined in \cite{Sh2}
and various fundamental groups of {\em locally} homogeneous quotients
have been exhibited \cite{GS}  although  a classification is
open.

Except for the Wallach flag manifolds, all the examples have  positively curved metrics
which are $\Ad_{N(\gH)}$-invariant and thus the Corollary is immediate for these examples.
In the remaining cases one just has to use in addition that the fundamental group
of an even dimensional positively curved manifold has at most two elements by the Synge Lemma.
The three $\Z_2$ subgroups of $\gS_3$ are conjugate and
thus up to conjugation there is only one $\Z_2$ extension of $\gH$ for the Wallach flag manifolds.

The rest of the paper is devoted to the proof of the theorem.
The even dimensional case is treated in section~\ref{sec:even}
and the odd dimensional case in sections~\ref{sec:nonsemi}, \ref{sec:semi} and \ref{sec:simple}.
We explain the strategy in more detail at the end of the following section.

\smallskip

\section{Obstructions to positive curvature}
We will classify compact simply connected Riemannian homogeneous spaces
 $\gK/\gH$ with positive sectional  curvature.
 We can thus assume that  $\gK$ and $\gH$ are compact and connected, and that the
 normal subgroup common to both is at most finite.
We fix a biinvariant metric $Q$ on the Lie algebra $\Lk$ of $\gK$
and let $\Lp$ denote the orthogonal complement of the subalgebra $\Lh$ in $\Lk$.
The $\gK$-invariant metrics of $\gK/\gH$ are in one to one correspondence
with positive definite selfadjoint endomorphisms
$G\colon \Lp\rightarrow \Lp$  which commute with $\Ad_{\gH}$.
In fact, one can use $G$ to define
 a scalar product on $\Lp$ by putting $\ml x,y\mr=Q(Gx,y)$ for $x,y\in \Lp$ and
 extend this equivariantly to $\gK/\gH$.
We will implicitly assume that some $G$ has been chosen.
The following criteria for finding planes with $0$ or non-positive curvature is used as an
obstruction in our classification.

\begin{lem}\label{ev}
\begin{enumerate}
 \item[a)]
If $x,y\in \Lp$ are linearly independent eigenvectors of $G$ with $[x,y]=0$, then
they generate a zero curvature plane in $\gK/\gH$.
 \item[b)] Let $x\in \Lp$ be an eigenvector to the smallest eigenvalue $\lambda$ of $G$
 and assume we can find a linearly independent vector $z\in \Lp$ with $[x,z]=0$. If we put $y=G^{-1}z$, then $x,y$ generate a plane of non-positive curvature
 in $\gK/\gH$.
\end{enumerate}
\end{lem}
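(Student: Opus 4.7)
My plan is to prove both parts by analysing the Jacobi field obtained by restricting a Killing field to the geodesic $\gamma_0(t) = \exp(tx)\cdot o$, where $o = e\gH$. First I note that $\gamma_0$ is a geodesic whenever $x$ is an eigenvector of $G$: applied to Killing vector fields on $\gK/\gH$, the Koszul formula gives $\nabla_{x^*} x^*|_o = U(x, x)$, where $U:\Lp\times\Lp\to\Lp$ is the symmetric bilinear form characterised by $2\ml U(X, Y), Z\mr = -\ml [X, Z]_\Lp, Y\mr - \ml [Y, Z]_\Lp, X\mr$; biinvariance of $Q$ together with the eigenvector property of $x$ gives $U(x, x) = 0$. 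A short additional calculation using only biinvariance of $Q$ and $Gx = \lambda x$ yields the structural identity
\[
U(x, Y) \;=\; \tfrac{1}{2}\,G^{-1}\,\bigl[x,\, GY - \lambda Y\bigr]_\Lp \qquad\text{for all } Y\in\Lp,
\]
from which $\nabla_x Y^*|_o = -\tfrac{1}{2}[x, Y]_\Lp + U(x, Y)$ is explicitly computable.

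Since Killing flows preserve geodesics, $J(t) := y^*(\gamma_0(t))$ is a Jacobi field along $\gamma_0$. Under the identification $T_{\gamma_0(t)} M \cong \Lp$ via $dL^{-1}_{\exp(tx)}$, it reads $J(t) = (\Ad(\exp(-tx))\,y)_\Lp$, with Taylor expansion $y - t[x, y]_\Lp + \tfrac{t^2}{2}[x, [x, y]]_\Lp + O(t^3)$. Two differentiations of $\|J\|^2 = \ml J, J\mr$ combined with the Jacobi equation yield the master identity
\[
K(x, y)\cdot\bigl(\|x\|^2\|y\|^2 - \ml x, y\mr^2\bigr) \;=\; \|J'(0)\|^2 \;-\; \tfrac{1}{2}\tfrac{d^2}{dt^2}\|J\|^2\big|_0.
\]

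For part (a), the assumption $[x, y] = 0$ in $\Lk$ makes $\Ad(\exp(-tx))y \equiv y$, so $\|J\|^2$ is constant; and the structural identity combined with $y$ being an eigenvector of $G$ forces $U(x, y) = 0$, hence $J'(0) = -\tfrac{1}{2}[x, y]_\Lp + U(x, y) = 0$. The master identity then gives $K(x, y) = 0$. For part (b), the hypothesis $[x, z] = 0$ (with $z = Gy$) together with biinvariance of $Q$ kills all but one term in the Taylor expansion of $\|J\|^2$ up to order $t^2$, yielding $\tfrac{1}{2}\tfrac{d^2}{dt^2}\|J\|^2|_0 = \|[x, y]_\Lp\|_G^2$; and the structural identity gives $J'(0) = -\tfrac{1}{2}(I + \lambda G^{-1})[x, y]_\Lp$. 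Diagonalising $G$ with eigenvalues $\mu_i$ in a $Q$-orthonormal eigenbasis $\{e_i\}$ and writing $[x, y]_\Lp = \sum_i a_i e_i$, the master identity collapses to
\[
K(x, y)\cdot\bigl(\|x\|^2\|y\|^2 - \ml x, y\mr^2\bigr) \;=\; \sum_i a_i^2\cdot\frac{(\lambda - \mu_i)(\lambda + 3\mu_i)}{4\mu_i}.
\]
Since $\lambda$ is the \emph{smallest} eigenvalue of $G$, each factor $\lambda - \mu_i \le 0$ while $\lambda + 3\mu_i > 0$ and $\mu_i > 0$, so the right-hand side is non-positive and $K(x, y) \le 0$.

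The main obstacle is not any single step but rather the bookkeeping of sign conventions: the Jacobi equation $J'' + R(J, \gamma')\gamma' = 0$, the bracket rule $[X^*, Y^*] = -[X, Y]^*$ for Killing fields of a left action, the Koszul formula, and the sign in the definition of $U$ must all be chosen compatibly. Once that is arranged, each step reduces to short calculations using biinvariance of $Q$ and the eigenvector hypothesis; and it is precisely the hypothesis that $\lambda$ is the \emph{smallest} eigenvalue of $G$ that supplies the non-positive sign in part (b).
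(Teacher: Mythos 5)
Your proof is correct, but it takes a genuinely different route from the paper. The paper starts from P\"uttmann's explicit formula for the sectional curvature of a homogeneous metric in terms of the operators $B_\pm(x,y)=\tfrac12([x,Gy]\mp[Gx,y])$, and simply substitutes the hypotheses: in (a) both $B_\pm(x,y)$ and $B_+(x,x)$ vanish outright, and in (b) they collapse to multiples of $[x,y]$, so the curvature becomes $\tfrac12\lambda\|[x,y]\|^2_Q+\tfrac14\lambda^2 Q([x,y],G^{-1}[x,y])-\tfrac34 Q(G[x,y],[x,y])$, which the authors bound above by $0$ via the crude spectral inequalities $Q(G^{-1}u,u)\le\lambda^{-1}\|u\|_Q^2$ and $Q(Gu,u)\ge\lambda\|u\|_Q^2$. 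You instead observe that $\gamma_0(t)=\exp(tx)\cdot o$ is a geodesic when $x$ is a $G$-eigenvector, restrict the Killing field $y^*$ to a Jacobi field along it, and read off $\ml R(y,x)x,y\mr=\|J'(0)\|^2-\tfrac12\tfrac{d^2}{dt^2}\|J\|^2|_0$. After diagonalising $G$ you land on
\[
\ml R(x,y)y,x\mr=\sum_i a_i^2\,\frac{(\lambda-\mu_i)(\lambda+3\mu_i)}{4\mu_i},
\]
which is in fact exactly equal to the paper's expression $\tfrac12\lambda\sum a_i^2+\tfrac14\lambda^2\sum a_i^2/\mu_i-\tfrac34\sum a_i^2\mu_i$ term by term; so the two derivations converge on the same quantity. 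What the paper's approach buys is brevity and independence from the sign-convention bookkeeping (Koszul, Nomizu $U$, Jacobi equation) that your route requires; what yours buys is conceptual transparency (curvature as the convexity defect of $\|J\|^2$) and an eigenvalue-by-eigenvalue formula that also exhibits precisely when equality $K(x,y)=0$ holds, namely when $[x,y]_\Lp$ lies entirely in the $\lambda$-eigenspace. Both arguments also implicitly verify the small point that $x$ and $y=G^{-1}z$ really are linearly independent, which is needed for the plane to be well defined.
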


\begin{proof}
One can express the formula for the sectional curvature of the homogeneous metric in terms of the biinvariant metric (see e.g. \cite{Pu} or \cite{GZ}) as follows:
\begin{align*}
    \ml R(x,y)y,x\mr &= Q(B_-(x,y),[x,y])
   -\tfrac{3}{4}Q(G[x,y]_{\Lp},[x,y]_{\Lp}) \\
&\quad + Q(B_+(x,y),G^{-1}B_+(x,y)) - Q(B_+(x,x),G^{-1}B_+(y,y))
\end{align*}
where $B_{\pm} (x,y) = \tfrac{1}{2} \bigl([x,Gy] \mp [Gx,y]\bigr)$ and $[x,y]_{\Lp}$ is the $Q$-orthogonal projection
of $[x,y]$ to $\Lp$.  This clearly implies (a).

For part (b),  observe that $B_+(x,y)\in\Lp$ for all $x,y\in\Lp$.
 Indeed, since $\ad_v$ commutes with $G$ for $v\in \Lh$, this well known fact (see e.g. \cite{GZ},p.624  or \cite{BB},p. 62) follows from
 \[
 Q([x,Gy],v)=-Q( x, \ad_v Gy)=-Q(x,G\ad_v y)=-Q(Gx,[v, y])=Q([Gx,y],v).
 \]

 If $x,y,z$ are as specified, then $[x,Gy]=0$ and hence $B_-(x,y)=\tfrac12 \lambda [x,y]$, $B_+(x,y)=-\tfrac12 \lambda [x,y]$ and $B_+(x,x)=0$.
 Thus we also have $[x,y]\in\Lp$, and $x,y$ are linearly independent since $Gx=\lambda x$ and $Gy=z$ are. Altogether
\begin{align*}
\ml R(x,y)y,x\mr&= \tfrac12 \lambda Q([x,y],[x,y])+\tfrac 14 \lambda^2 Q([x,y],G^{-1}[x,y]) -\tfrac{3}{4} Q(G[x,y],[x,y])\\
&\le (\tfrac12+\tfrac14-\tfrac34)\lambda \bigl\|[x,y]\bigr\|_Q^2=0
\end{align*}
where we used the inequalities $Q(Gu,u)\ge \lambda \|u\|^2_Q$ and
$Q(G^{-1}u,u)\le 1/\lambda \|u\|^2_Q$ for all $u\in\Lp$.
\end{proof}

The lemma can also be used to give a new proof for the following essential  obstruction

\begin{lem}[Berger] If
$\gK/\gH$ is a positively curved $n$-dimensional homogeneous space,
 then $\rank \gK=\rank \gH$ if $n$ is even, and $\rank \gK=\rank \gH+1$ if $n$ is odd.
\end{lem}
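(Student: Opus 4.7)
Plan. A parity observation reduces the two cases to one: since $\dim\gK - \rank\gK$ equals twice the number of positive roots of $\gK$ and is therefore even, we have $\dim(\gK/\gH) \equiv r \pmod 2$ with $r := \rank\gK - \rank\gH$. The lemma is thus equivalent to the single statement $r \leq 1$, and I argue by contradiction assuming $r \geq 2$. Fix maximal tori $T_\gH \subset T$ of $\gH \subset \gK$ and let $V := \Lt \cap \Lp$, an abelian subspace of $\Lp$ of dimension $r$.

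The strategy is to apply Lemma~\ref{ev} to produce a plane of non-positive sectional curvature, contradicting positive curvature. Let $\lambda$ be the smallest eigenvalue of $G$ on $\Lp$ and set $\mu_1 := \min_{v \in V\setminus\{0\}} Q(Gv,v)/Q(v,v) \geq \lambda$. In the favorable case $\mu_1 = \lambda$, the minimizer $v_1 \in V$ attains the minimum of the Rayleigh quotient on all of $\Lp$, hence lies in the lowest $G$-eigenspace $\Lp_\lambda$. Since $r \geq 2$, one can choose $v_2 \in V$ linearly independent from $v_1$; then $[v_1,v_2] = 0$ because $V$ is abelian, and Lemma~\ref{ev}(b) applied with $x = v_1$, $z = v_2$ yields a plane of non-positive curvature, the desired contradiction.

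When $\mu_1 > \lambda$ the lowest eigenspace $\Lp_\lambda$ is $Q$-orthogonal to $V$ and the argument becomes more subtle. My plan is to descend to the $\Lt_\gH$-centralizer $\tilde\Lk := \{y \in \Lk : [y, \Lt_\gH] = 0\}$, whose $\Lp$-part $\tilde\Lp = \Lp^{T_\gH}$ contains $V$ and is $G$-invariant (because $G$ commutes with $\Ad_{T_\gH}$). Using the $\Lt$-root decomposition of $\tilde\Lk$ together with the fact that $G$ acts as a scalar on each $\Ad_\gH$-irreducible summand of $\Lp$, one would isolate a minimum eigenvector $x$ of $G|_{\tilde\Lp}$ together with a linearly independent commuting partner $z \in \tilde\Lp$, and apply Lemma~\ref{ev}(b) once more. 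The main obstacle is precisely this second case: the minimum eigenspace may sit in the root-space part of $\tilde\Lp$ away from the Cartan direction $V$, so producing the commuting partner requires carefully exploiting the root structure of $\tilde\Lk$ (whose rank over $\Lt_\gH$ is still $r \geq 2$) in tandem with the $\Ad_\gH$-isotypical decomposition of $\Lp$.
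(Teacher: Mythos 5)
Your parity reduction and the first case ($\mu_1 = \lambda$) are correct and match the paper's structure in spirit. The genuine gap is in your second case, and the obstacle you identify is real: when the $G$-minimum lies outside $V$, you sketch an attempt to find a minimum eigenvector $x$ of $G|_{\tilde\Lp}$ and a commuting partner $z \in \tilde\Lp$, but Lemma~\ref{ev}(b) as stated requires $x$ to be an eigenvector for the \emph{smallest eigenvalue of $G$ on all of $\Lp$}, and by your hypothesis $\mu_1 > \lambda$ the bottom eigenspace of $G$ lies entirely outside $\tilde\Lp$; the lemma simply does not apply with a vector whose eigenvalue is merely the minimum of the restriction $G|_{\tilde\Lp}$. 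No amount of root-space bookkeeping for $\tilde\Lk$ fixes this, because the issue is not combinatorial but metric.

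The missing idea is geometric, and it also makes your case distinction unnecessary. The fixed point component $\Fix(T_\gH)_0 \subset \gK/\gH$ is a \emph{totally geodesic} submanifold, hence inherits positive sectional curvature; since $T_\gH$ is maximal in $\gH$ one has $C(T_\gH)_0 \cap \gH = T_\gH$, so $C(T_\gH)_0$ acts transitively and freely on $\Fix(T_\gH)_0$, and because $T_\gH$ is central in $C(T_\gH)_0$ the quotient $C(T_\gH)_0/T_\gH$ is a Lie group carrying a positively curved \emph{left-invariant} metric. Now Lemma~\ref{ev}(b) can be applied legitimately to this smaller homogeneous space with trivial isotropy and its own metric endomorphism $G|_{\tilde\Lp}$: an eigenvector $x$ of the smallest eigenvalue of $G|_{\tilde\Lp}$ can have no linearly independent commuting partner in $\tilde\Lp$, so the centralizer of $x$ in the Lie algebra of $C(T_\gH)_0/T_\gH$ is one-dimensional, forcing $\rank(C(T_\gH)_0/T_\gH) = r \le 1$. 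In short, you correctly identified the subspace $\tilde\Lp$ to work in, but you need to recognize that it is the tangent space of a totally geodesic positively curved homogeneous space in its own right; only then does the restricted operator $G|_{\tilde\Lp}$ become the correct metric tensor to which the lemma applies.
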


\begin{proof} Consider first the special case of a trivial group $\gH$.
Then $\gK$ is endowed with a left invariant positively curved metric.
In our above notation, if $x\in \Lp=\Lk$ is an eigenvector to the minimal eigenvalue of $G$,
then by Lemma~\ref{ev} b) every vector  $z\in \Lk$ that commutes with $x$ is linearly dependent to $x$.
Thus $\rank(\gK)\le 1$.

We now use the following well known fact, which will also be a crucial tool for us later on.
Let $\gL\subset \gH\subset\gK$ and $C(\gL)_0$ the identity component of the centralizer of $\gL$ in $\gK$.
Then $C(\gL)_0$ acts transitively on the component of the fixed point set $\Fix(L)_0\subset \gK/\gH$ through $e\gH$,
as one easily sees by computing the tangent space of the orbit $C(\gL)_0\cdot e\gH$.
Thus $\Fix(\gL)_0=C(\gL)_0/C(\gL)_0\cap\gH$ is a totally geodesic submanifold of $\gK/\gH$ and hence
has positive curvature.

We now apply this to a maximal torus  $\gT\subset \gH$. Then $C(\gT)_0\cap\gH=\gT$ and hence $C(\gT)_0/\gT$ acts transitively and  freely on $\Fix(\gT)_0$.
 Thus $C(\gT)_0/\gT$ admits a positively curved left invariant metric and
by the above special case $\rank(C(\gT)_0/\gT)\le 1$ or equivalently
$\rank(\gK)\le \rank(\gH)+1$.
Since $\dim(\gL)-\rank(\gL)$ is an even number for any compact Lie group $\gL$,
the lemma follows.
\end{proof}

By \cite{Wi2}, if a group $\gK$  of the form $\SO(n),\SU(n),\Sp(n)$ acts isometrically on a positively curved manifold
in such a way that the principal isotropy group contains a $k\times k$ block of $\gK$ with $k\ge 3$,
then the underlying manifold is covered by a manifold which is homotopy equivalent to a rank one symmetric space.
In the homogeneous case one can strengthen it as follows, which will be our main tool in the classification since it allows one to proceed by induction on the dimension
of the Lie group. Although this result follows from \cite{Wi2} and the classification of homogeneous spaces homotopy equivalent to a rank one symmetric space, we give here a simple proof in the homogeneous case.

\begin{lem}[Block Lemma]\label{lem: block}
 Let $\gK\in \{ \SO(n),\SU(n),\Sp(n)\}$ and assume that a connected proper subgroup
 $\gH$ contains  the lower $k\times k$-block of $\gK$ with $k\ge 3$ if $\gK\in \{\SO(n),\SU(n)\}$ and $k\ge 2$ if
$\gK=\Sp(n)$. If $\gK/\gH$ admits an $\gK$-invariant positively curved metric, then
$\gH$ contains  a group conjugate to  a lower $(n-1)\times (n-1)$-block
and $(\gK,\gH)$ is one of the pairs listed in Table B.
\end{lem}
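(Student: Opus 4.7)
The plan is to exploit $\Ad_\gL$-invariance of the homogeneous metric $G$, where $\gL\subset\gH$ denotes the $k\times k$ block, together with a direct commutator computation in the off-diagonal block of $\Lk$, in order to apply Lemma~\ref{ev}(a) and produce a zero-curvature plane unless $\gH$ already contains the stabilizer of a single vector. Write $F=\R,\C,\QH$ in the three respective cases $\gK=\SO(n),\SU(n),\Sp(n)$. Under $\Ad_\gL$, the Lie algebra decomposes as $\Lk=\Lk'\oplus\Lg(\gL)\oplus M$, where $\Lk'$ is the upper-left $(n-k)\times(n-k)$ block (augmented by the central diagonal direction in the $\SU$-case), $\Lg(\gL)\subset\Lh$ is the adjoint block, and $M\cong F^{n-k}\otimes_FF^k$ is the off-diagonal block, isotypic of multiplicity $n-k$ for the standard representation of $\gL$ on $F^k$. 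Under the stated hypotheses on $k$ the standard representation is absolutely $F$-irreducible, so by Schur's lemma $\Lp\cap M=V_\Lp\otimes_FF^k$ for an $F$-subspace $V_\Lp\subset F^{n-k}$, and $G$ acts on $\Lp\cap M$ as $A\otimes\Id_{F^k}$ for an $F$-Hermitian $A\in\mathrm{End}_F(V_\Lp)$.

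Representing elements of $M$ as block matrices $\begin{pmatrix}0&vf^*\\-fv^*&0\end{pmatrix}$ with $v\in F^{n-k}$, $f\in F^k$, a direct calculation of the two diagonal blocks of a commutator gives the key identity
\[
[v\otimes f,\,v'\otimes f']=0\quad\text{whenever}\quad\ml v,v'\mr_F=0\text{ and }\ml f,f'\mr_F=0.
\]
Suppose now that $\dim_FV_\Lp\ge 2$. Pick $F$-orthonormal eigenvectors $v_1,v_2\in V_\Lp$ of $A$ and $F$-orthonormal $f_1,f_2\in F^k$; this is possible because $k\ge 2$ in the $\Sp$-case and $k\ge 3$ in the $\SO,\SU$-cases. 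Then $v_1\otimes f_1$ and $v_2\otimes f_2$ are linearly independent commuting eigenvectors of $G$, and Lemma~\ref{ev}(a) produces a zero-curvature plane, contradicting positive curvature. Hence $\dim_FV_\Lp\le 1$.

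If $\dim_FV_\Lp=0$, the entire off-diagonal block $M$ lies in $\Lh$; bracketing sufficiently many rank-one pairs in $M$ yields all of $\Lk'\oplus\Lg(\gL)$, forcing $\Lh=\Lk$ and contradicting properness. If $\dim_FV_\Lp=1$, fix a unit $v_0\in F^{n-k}$ with $V_\Lp=Fv_0$; then $\Lh$ contains $v_0^\perp\otimes_FF^k$ together with $\Lg(\gL)$, and bracketing pairs $w\otimes f,\,w'\otimes f'$ with $w,w'\in v_0^\perp$ produces the full $(n-k-1)\times(n-k-1)$ block inside $\Lk'$. The upshot is that $\Lh$ contains the Lie algebra stabilizer of the vector $(v_0,0)\in F^n$, which is a conjugate of the standard lower $(n-1)\times(n-1)$ block — namely $\fso(n-1)$, $\fsu(n-1)$, or $\symp(n-1)$. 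To finish, one invokes the standard fact that the only connected proper subgroups of $\SO(n), \SU(n), \Sp(n)$ containing these stabilizers are $\SO(n-1)$ in the $\SO$-case; $\SU(n-1)$ and $\U(n-1)$ in the $\SU$-case; and $\Sp(n-1)$, $\Sp(n-1)\U(1)$, and $\Sp(n-1)\Sp(1)$ in the $\Sp$-case — all of which appear in Table~B. The one step that requires real care is the bracket-generation claim used in both the $\dim_FV_\Lp=0$ and $\dim_FV_\Lp=1$ cases; in practice it is handled by inspecting the rank-one matrices $vv'^*$ and $ff'^*$ that appear in the diagonal corners of $[v\otimes f,v'\otimes f']$.
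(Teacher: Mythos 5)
Your proof is correct and follows a genuinely different route from the paper's. The paper first reduces to the case where the block $\gL_k$ is normal in $\gH$ by choosing $k$ maximal (if not normal, a nontrivial $\gL_k$-sub-representation of $\Lg(\gH)/\Lg(\gL_k)$ forces a $(k+1)\times(k+1)$ block, contradicting maximality); it then handles $k\le n-2$ by repeatedly conjugating $G$ and $\gH$ by elements of the upper block $\gU_{n-k}$ so that each row of $\Lq$ consists of $G$-eigenvectors, after which commuting eigenvectors are immediate. You instead apply Schur's lemma once to the isotypic component $\Lp\cap M$, obtaining $G|_{\Lp\cap M}=A\otimes\Id_{F^k}$, and then case-split on $\dim_F V_\Lp$. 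This avoids both the maximality hypothesis and the iterated conjugation (which is really the same transitivity of the commutant on the rows that Schur encodes), and it makes the endgame more explicit: the paper's statement that "$(\gK,\gH)$ is one of the pairs in Table B" when $k=n-1$ is left as "clear", whereas you actually list the intermediate subgroups. Two remarks for completeness. First, when $\gK=\SO(n)$ and $k=3$ the standard representation of $\SO(3)$ is isomorphic to the adjoint, so $M$ alone is not a full isotypic component of $\Lk$; one should observe that since $\Lg(\gL)\subset\Lh$, the $Q$-orthogonality of $\Lg(\gL)$ and $M$ forces $\Lp\cap\bigl(\Lg(\gL)\oplus M\bigr)=\Lp\cap M$, so the Schur argument still applies. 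Second, for the $\dim_F V_\Lp=0$ case, the bracket-generation you flag as delicate can be replaced by the observation that $M\subset\Lh$ forces $\Lp\subset\Lk'$, on which $\gL$ acts trivially, so $\gL$ would lie in the (finite) kernel of the isotropy representation — a contradiction. The bracket argument in the $\dim_F V_\Lp=1$ case is fine as you sketch it: the top-left corners $-v\ml f,f'\mr v'^*+v'\ml f',f\mr v^*$ with $v,v'\in v_0^\perp$ span the full stabilizer of $v_0$ in $\Lk'$.
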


\begin{proof} We may assume that $k$ is chosen maximal among all groups which are conjugate to $\gH$.
We then let $\gL_k$ denote the lower $k\times k$-block,
$N(\gL_k)$ its normalizer and let
$\Lq\subset \Lk$ denote the orthogonal complement of $N(\gL_k)$.
An element $v\in \Lq$ is determined by its entries in the upper right $(n-k)\times k$ corner
and we say $v$  is in the $i$-th row if the entries  in the other rows are zero, $i=1,\ldots,n-k$.
 Under the action of $\gL_k$ the space $\Lq$ is  decomposed by the rows  into $(n-k)$ pairwise equivalent  sub-representations
 of $\gL_k$. If $\gK=\SO(n)$ or $\gK=\SU(n)$ with $k\ge 3$,
 these irreducible representations are orthogonal resp. unitary, and if $\gK=\Sp(n)$
  with $k\ge 2$ they are quaternionic.
  This in turn implies that the action of the upper $(n-k)\times (n-k)$-block $\gU_{n-k}$ on $\Lq$
  induces a transitive action on the irreducible $\gL_k$-sub-representations.

 We claim that $\gL_k$ is normal in $\gH$.
Otherwise, the isotropy representation of $\gH/\gL_k$ contains a nontrivial irreducible sub-representation of
$\gL_k$ which can be seen as a  sub-representation of $\Lq$.
As explained, we may assume that it is given by the last row in $\Lq$.
But now it is easy to see that $\gH$ contains the lower $(k+1)\times (k+1)$-block
in contradiction to our choice of $k$.

 Thus $\gL_k$ is normal in $\gH$.
 If $k=n-1$, then clearly $(\gK,\gH)$ is one of the pairs listed in Table~B.
  Suppose on the  contrary $k\le n-2$.  We
  choose an irreducible sub-representation $V\subset \Lq$
  consisting of eigenvectors of the selfadjoint endomorphism $G$.
  As before $\Ad_g V$ is given by the last row in $\Lq$ for some $g\in \gU_{n-k}$.
  After conjugating $G$ and $\gH$ with $\Ad_g$ we can therefore assume that
  the last row in $\Lq$ consists of eigenvectors.
  Clearly we can iterate this argument and we may assume without loss
  of generality that each row of $\Lq$ consists of eigenvectors of $G$.
 But now it is obvious that we can find commuting eigenvectors and by Lemma~\ref{ev} a) this is a contradiction.
\end{proof}

\begin{rem*}\label{gen}
The subgroups $\gU(2)\subset \SO(4)$ and  $\Sp(2)\subset \SU(4)$ contain a
$2\times 2$  but no $3\times 3$-block. Thus
we can not allow $k=2$ for $\gK\in \{\SU(n),\SO(n)\}$.
The proof breaks down, since for a $2\times 2$-block $\gL_2\subset \gK$
the irreducible sub-representation in $\Lq$ are complex if $\gK=\SO(n)$
and quaternionic for $\gK=\SU(n)$ and the upper $(n-2)\times (n-2)$-block
no longer acts transitively on them.
\end{rem*}

The proof of the theorem in the next few sections will go by induction on the dimension of the Lie group, that is,
at all times we will assume that the main theorem holds for all Lie groups with dimension strictly below $\dim(\gK)$.
 For any element $\iota\in \gH$ the fixed point set $\Fi$ is totally geodesic and hence positively curved.
 If $\Fi_0$ is the component of $\Fi$ containing the base point, then the id component of the  centralizer $C(\iota)_0$ acts transitively on it,
  with stabilizer group  $\Hi:=C(\iota)_0\cap \gH$ and thus $C(\iota)_0/\Hi$ is positively curved.
   For simplicity of notation we let $\Fi$ stand for $\Fi_0$   and $C(\iota)$ for $C(\iota)_0$.
    Since $\iota$ is contained in a maximal torus of $\gH$ which in turn can be extended to a maximal torus of
 $\gK$, we have $\rank C(\iota)=\rank \gK$, $\rank(\Hi)=\rank(\gH)$ and $\iota$ is a central element in the identity component of $\Hi$.
   Hence the codimension of these fixed point sets $C(\iota)/\gH^\iota$ is always even,
   and  we can do the induction in even and odd dimensions separately.

   In   all of the cases we will consider, $\Ad_\iota$ is an involution, and hence
  $\gK/C(\iota)$ is a symmetric space with $\rank(C(\iota))=\rank(\gK)$.
  If $\gK$ is an exceptional Lie group, then the classification of symmetric spaces
 only allows  the following possibilities for the pair $(\gK, \ C(\iota))$
\begin{align*}
&\gG_2 \ : \ \SO(4),\qquad \gF_4\ : \  (\Sp(3)\times\Sp(1))/\Delta\Z_2,\quad  \Spin(9) \\
 &\gE_6\ : \  (\Spin(10)\times\gS^1)/\Delta\Z_4 ,\quad (\SU(6)\times\SU(2))/\Delta\Z_2 \\
 &\gE_7 \ : \  (\Spin(12)\times\SU(2))/\Delta\Z_2 ,\quad  (\gE_6\times\gS^1)/\Delta\Z_3,\quad \SU(8)/\Z_2\\
    &\gE_8 \ : \  (\gE_7\times\SU(2))/\Delta\Z_2, \quad \Spin(16)/\Z_2=:\SO'(16)
\end{align*}

  We can assume,  by making the action ineffective if necessary, that the semisimple part of $\gK$ is simply connected.
But when $\gK=\Spin(n)$, we will usually replace it by $\SO(n)$,
at the expense of possibly making $\gK/\gH$ not simply connected.
We will use the following notational conventions

\begin{enumerate}
 \item[$\bullet$] If we write a Lie group $\gL$ as $\gL=\gL_1\cdot \gL_2$
 then $\gL_1$ and $\gL_2$ are normal subgroups generating $\gL$ and $\gL_1\cap \gL_2$ is
 finite.
 \item[$\bullet$] If a group is identified as $\SO(4)$ then $\SU(2)_\pm$ will stand for the
 two simple normal subgroups.
 \item[$\bullet$]  $G\colon \Lp\rightarrow \Lp=\Lh^{\perp}$ will always
 stand for a positive definite selfadjoint endomorphism
 inducing a positively curved metric on the homogeneous space $\gK/\gH$, see first paragraph of this section.
 Both $\gH$ and $\gK$ are connected and the kernel of the action of $\gK$ on $\gK/\gH$ is  finite.
\item[$\bullet$]
  Sometimes we will just say that we found commuting eigenvectors
 of $G$. This is same as saying that the case under consideration can not occur,
 as it contradicts Lemma~\ref{ev} a).
Similarly, if a situation arises
 where our induction hypothesis can be used
 to show that some fixed point component cannot have positive sectional curvature,
 it should be understood that we can move on to the next case.
\end{enumerate}

\section{The even dimensional case}\label{sec:even}

Since $\rank \gK=\rank \gH$, all irreducible sub-representations of $\gH$ in $\Lp$ are inequivalent,
and hence the metric is diagonal,
that is, every vector in an irreducible sub-representation of $\gH$ is an eigenvector of $G$.
 In addition, we can assume that $\gK$ is simple since otherwise $\gK/\gH$ is a product
 homogeneous space and every $\gK$ invariant metric is a product metric, contradicting positive curvature.
 Furthermore, all elements in $\gK$ are, up to conjugacy, also contained in $\gH$ and hence
 for each symmetric pair $(\gK,\gL)$ with $\rank(\gK)=\rank(\gL)$
 we can find an
 element $\iota$
in $\gH$ such that $C(\iota)\cong \gL$ and  $\Ad_\iota$ is an involution.
In case of the classical Lie groups we always assume that $\gH$ contains the classical
diagonal torus  and then $\iota\in \gH$ will be diagonal as well.

Since $C(\iota)/\Hi$
is again  an even dimensional space of
 positive curvature, only one factor in $C(\iota)$ can act effectively on $\Fi$,
  and the others must lie in $\Hi$ and hence in $\gH$.
  Furthermore, if $C(\iota)/\Hi$ is not a point, then the action of the remaining factor
  is listed in Table A  or B, since we always
  assume that the main theorem holds for Lie groups of dimension strictly smaller
  than $\dim(\gK)$.  We point out though that  low dimensional isomorphisms of Lie algebras
  sometimes give rise to less obvious presentations,  e.g. $ \CP^3=\SO(6)/\U(3)=\SO(5)/\U(2)$.

We now discuss each simple Lie group separately. Due to the low dimensional isomorphisms
$\Spin(5)\cong\Sp(2) $ and $\Spin(6)\cong\SU(4)$ we only need to consider $\Spin(n)$ for
$n\ge 7$, and in that case replace it by $\SO(n)$ for simplicity.
The aim is to confirm that $(\gK,\gH)$ is listed in Table A or B.

\subsection{$\gK=\SU(3)$} The only  rank two subgroups of $\SU(3)$ are $\gU(2)$ and $\gT^2$, and
both pairs correspond to listed examples.

\subsection{$\gK=\SU(4)$} In this case, the fixed point set $\Fi=\gS(\U(2)\U(2))/\Hi$ has positive curvature,
for $\iota=\diag(-1,-1,1,1)$,
 and thus $\Hi$, and hence also $\gH$, contains an $\SU(2)$-block, say the upper $2\times 2$-block.

We next look at the involution $\iota_2=\diag(1,-1,-1,1)$ and repeat the argument. It shows that
$\gH$ also contains either the middle $\SU(2)$-block or the $(1,4)$-$\SU(2)$-block.
Neither of these blocks commutes with the upper $\SU(2)$-block and in fact the two blocks generate
an $\SU(3)$-block and we are done by the Block Lemma.

\subsection{$\gK=\SU(k)$ ($k\ge 5$)}  In this case we look at the fixed point set
of $\iota=\diag(\zeta,\ldots,\zeta,-\zeta)\in \gH$ where $\zeta\in \gS^1$ is a primitive $2k$-th root of unity.
Thus $C(\iota)= \gU(k-1)$  and the fixed point component
$\U(k-1)/\Hi$ admits positive curvature. Since $k-1\ge 4$, it follows that $\gH$
contains a $(k-2)\ge 3$ block, and we are done by the Block Lemma.

 \subsection{$\gK=\Sp(2)$} We may assume $\iota=\diag(-1,1)\in \gH$.
 Since $\Sp(1)\times \Sp(1)/\gH^\iota$ admits positive curvature, $\gH$ contains an $\Sp(1)$ block.
 Thus either $\gH=  \Sp(1)\cdot \Sp(1)$ or $\gH= \Sp(1)\cdot\gS^1$ and
 both quotients are listed in Table B.

 \subsection{$\gK=\Sp(3)$}{ We may assume that the involutions $\iota_1=\diag(-1,1,1)$, $\iota_2=\diag(1,-1,1)$
 and $\iota_3=\diag(1,1,-1)$ are in $\gH$ and that $\gH$ contains no $2\times 2$-block.
 Then $\Fix(\iota_h)=\Sp(2)\Sp(1)/\gH^{\iota_h}$ and since $\gH$
 does not contain the $\Sp(2)$, it must contain the $\Sp(1)$-block --
 given by the $\gS^3$ in the $h$-th diagonal entry. Hence
 $\{\diag(a,b,c)\mid a,b,c\in \gS^3\} \subset \gH$. Since $\gH$ contains no $2\times 2$-block equality must hold and we
 are left with the 12 dimensional Wallach flag manifold.

 \subsection{$\gK=\Sp(k)$ ($k\ge 4$)} We may assume that  $\iota=\diag(-1,-1,1,\ldots,1)\in\gH$
 and hence $\Fi=\Sp(2)\cdot \Sp(k-2)/\gH^\iota$. Thus
 $\gH$ contains either the upper $2\times 2$ block or the lower $(k-2)\times (k-2)$-block.
 In either case the result follows from the Block Lemma.

 \subsection{$\gK=\SO(k)$ ($k\ge 7$)} We may assume $\iota=\diag(-1,-1,-1,-1,1\ldots,1)\in\gH$ and hence
 $\Fi=\SO(4)\cdot \SO(k-4)/\gH^\iota$ admits positive curvature. The group
 $\gH^\iota$ must contain all but one of the connected simple normal subgroups of $\SO(4)\cdot \SO(k-4)$.
 This implies that $\gH$ contains either the upper $4\times 4$-block or the lower $(k-4)\times (k-4)$-block
and we are done by the Block Lemma.

 \subsection{$\gK=\mathsf{G}_2$} For any involution $\iota\in \gH$, we have $\Fi=\SO(4)/\gH^\iota$, which implies that $\Hi$
  contains at least a group
 isomorphic to $\gU(2)\subset \SO(4)$. We claim that  $\gH=\gU(2)$  can not hold.
 In fact, otherwise  we could choose $\iota$ as a non-central involution in $\gU(2)$ and would get
 $\gH^\iota =\gT^2$, but $\SO(4)/\gT^2$ does not admit positive curvature. Thus $\gH$ is strictly bigger
 than $\gU(2)$. The only connected proper subgroups of $\mathsf{G}_2$
 satisfying this are $\SU(3)$ and $\SO(4)$.
 In the former case, $\mathsf{G}_2/\SU(3)\cong \Sph^6$ is listed in Table B
while the latter case is not possible as $\mathsf{G}_2/\gH$ would be isometric to the rank 2 symmetric space
 $\mathsf{G}_2/\SO(4)$.

 \subsection{$\gK=\gF_4$} Choose an involution $\iota\in \gH$ whose centralizer in $\gF_4$ is given by $\Spin(9)$.
 Since $\Fi=\Spin(9)/\gH^\iota$ has positive curvature,
  $\gH$ contains $\Spin(8)$. The only proper connected subgroups in $\gF_4$ satisfying this are
 $\Spin(8)$ and $\Spin(9)$ and and both correspond to listed quotients.

 \subsection{$\gK=\gE_i$, $i=6,7,8$}
 We choose an element $\iota\in\gH$
 whose centralizer  $C(\iota)$ is given  by $\gS^1\cdot \Spin(10)$  if $i=6$, $\gS^1\cdot \gE_6$ if $i=7$
 and   $\SO'(16)$ if $i=8$.
 By induction we can use the Tables A and B in the introduction to see that
 $C(\iota)$ can not act transitively by isometries on a positively curved manifold of positive even dimension.
 Hence $C(\iota)=\Hi\subset \gH$ and equality must hold
 since $C(\iota)$ is maximal -- a contradiction as
 $\gK/C(\iota)$ is a higher rank symmetric space.

\section{$\gK$ not semisimple}\label{sec:nonsemi}
In the remaining three sections we assume that $\gK/\gH$ is an almost effective representation
of an odd-dimensional
homogeneous space of positive sectional curvature and in this section
we treat the case of a non-semisimple compact group $\gK$.
Since $\rank(\gK)=\rank(\gH)+1$, the center of $\gK$ can be at most one dimensional.
After passing to a finite cover
we can assume $\gK=\gS^1\times \gK_2$ with $\gK_2$  semisimple.
We let $\pr_2(\gH)$ denote the projection of $\gH$ to the second factor.
Since $\gK/\gH$ has finite fundamental group,
the projection of $\gH$ to the first factor is surjective.

If we put $\gH_2=\gK_2\cap \gH$, then $\gH = \Delta\gS^1\cdot \gH_2$.
Since the projection to the first factor is surjective, $\gK_2$
acts transitively on $\gK/\gH$ with stabilizer $\gH_2$ and by induction on the dimension of the Lie group
$(\gK_2,\gH_2)$ is up to a finite covering one of the pairs listed in Table A or B.
The group $\pr_2(\gH)$ is contained in the normalizer of $\gH_2$ in $\gK_2$
and thus $N(\gH_2)/\gH_2$ is at least one dimensional. Combining this with the fact that $\gK_2$ is semisimple
we deduce that
$(\gK_2,\gH_2)$ is given by $(\SU(n),\SU(n-1))$, $(\Sp(n),\Sp(n-1))$
or $(\SU(3),\diag(z^p,z^q,\bar{z}^{p+q}))$ (with $p\ge q\ge 1$ and $\gcd(p,q)=1$).
In either case the corresponding $\gS^1$ extension is also listed in Tables A and B
and thus we are done.

\section{$\gK$  semisimple but not simple}\label{sec:semi}

We assume in this section $\gK=\gK_1\times \gK_2$ is  a simply connected product group
with semisimple factors of positive rank.
Notice that
$\rank(\gK_i)-\rank(\gH_i)\le \rank(\gK)-\rank(\gH)=1$ holds for
$\gH_i=\gK_i\cap \gH$, $i=1,2$.
We distinguish among three cases.

\subsection{$\gH_1$ and $\gH_2$ are finite.}\label{semi_1}
Then $\rank(\gK_i)\le \rank(\gH_i)+1=1$ and thus
$\gK=\gS^3\times \gS^3$. If $\gH$ is three dimensional then it is necessarily given
by $\Delta \gS^3$ and $\gK/\gH\cong \Sph^3$ is in our list.
Otherwise $\gH$ is a circle and we can assume $\gH=\{(z^p,z^q)\mid z\in \gS^1\}$
with $p\ge q\ge 1$
and $\gcd(p,q)=1$.
We want to rule out these potential examples by finding commuting eigenvectors.
The tangent space $\Lp$ splits into a trivial
 $1$-dimensional module and a $4$-dimensional module spanned by $(j,0),(k,0),(0,j),(0,k)$
 on which $\gH$ acts as a rotation
 on the span of
  the first two and the last two vectors.
If $p\neq q$, the
sub-representations are inequivalent and thus $G$-invariant.
Therefore  $(j,0)$ and $(0,j)$ are commuting eigenvectors.
If $p=q$ we can assume that  one eigenvector is given by
   $e_1=(\alpha j, \beta j e^{i\psi})$ for some $\alpha,\beta,\psi\in \R$ with $\alpha^2+\beta^2=1$.
    A second eigenvector to the same eigenvalue is then obtained by the action of $\gH$
    to be $e_2=(\alpha k, \beta k e^{i\psi})$.
Any vector in the four dimensional modul which is $Q$-orthogonal to both,
must thus be an eigenvector as well.
Hence $e_3=(-\beta j, \alpha j e^{i\psi})$ is an eigenvector and it clearly commutes with $e_1$.
Here we used indirectly that we can choose the biinvariant metric $Q$ such that both factors are weighted
equally.

\subsection{ $\gH_1$ is finite but $\gH_2$ is not.}\label{semi_2}
Then $\rank(\gK_1)=1$ and thus $\gK=\gS^3\times \gK_2$.

We start with the case where
 $\gH$ projects surjectively to the first factor.
 Then
$\gH= \Delta \gS^3\cdot \gH_2$
and the factor $\gK_2$ acts transitively
on the homogeneous space $\gK/\gH$ with stabilizer $\gH_2$.
By induction on the dimension of the Lie group, the pair $(\gK_2,\gH_2)$ is up
to a finite covering  listed in Tables A or B.
Since $\pr_2(\gH)$ is contained in the normalizer of $\gH_2$ it follows that $N(\gH_2)/\gH_2$
is three dimensional.
Therefore the pair $(\gK_2,\gH_2)$ is given by
$(\SU(3),\diag(z,z,\bar{z}^2))$ or $(\Sp(n),\Sp(n-1))$.
But then $(\gK,\gH)$ is (up to finite kernel) either $(\SU(3)\times \SO(3),\gU(2))$
or $(\Sp(n)\times \Sp(1),\Sp(n)\Delta \Sp(1))$ and both are listed in Tables A and B.

 The projection of $\gH$ to the $\gS^3$ factor can not be trivial as otherwise  $\gH=1\times \gH_2$
 would be a product subgroup of $\gS^3 \times \gK_2$ and every invariant metric of $\gK/\gH$ would
 be a product metric.

It remains to consider the subcase where the projection of $\gH$ to the $\gS^3$ factor is given by an $\gS^1$.
Let $N(\gH_2)$ denote the normalizer of $\gH_2$ in $\gK_2$.
If $N(\gH_2)/\gH_2$ is three dimensional then a fixed point component of $\gH_2$ is
locally isometric to $\bigl(\gS^3\times \gS^3\bigr)/\gS^1$, which is impossible as we saw in \ref{semi_1}.

Otherwise, we have $\dim(N(\gH_2)/\gH_2)\le 1$.
This implies that the two dimensional irreducible representation of $\gH$ in the first factor is not equivalent to any other sub-representation of $\gH $ in $\Lp$
since no other nontrivial sub-representation has $\gH_2$ in its kernel.
Thus the two dimensional sub-representation in the first factor consists of eigenvectors and it is now easy to
find commuting eigenvectors of $G$.

\subsection{ Both $\gH_1$ and $\gH_2$ are infinite.}
Then there is a nontrivial irreducible sub-representation
which does not contain $\gH_1$ in its kernel. Since  any such
sub-representation is tangent to the first factor there are eigenvectors in the first factor.
Similarly there are also eigenvectors
in the second factor and thus we found commuting eigenvectors.

\section{The odd dimensional case with $\gK$ Simple}\label{sec:simple}

The proof is again by induction on the dimension of the group.
Again we frequently use that the centralizer $C(\iota)$ of an element
$\iota \in \gH$ acts transitively on an odd-dimensional fixed point component of $\iota$
and by induction this action  is, up to possibly a larger kernel, (locally) given
by one listed in  Tables A and B in the introduction.
We point out though that low dimensional isomorphisms of Lie algebras
  sometimes give rise to less obvious presentations:
  \begin{align*}
 &\RP^7=\SO(5)/\SU(2)_\pm=\SO(6)/\SU(3)=\SO(7)/\gG_2=\SO(8)/\Spin(7)\\
 &\RP^7=\SO(5)\SU(2)/\SU(2)_-\cdot\Delta\SU(2)_+  \ ,\  \Sph^5=\SU(4)/\Sp(2), \quad \RP^{15}=\SO(9)/\Spin(7).
\end{align*}

As explained, for $\iota\in \gH$, the group $\gH$ is  either equal to $\Hi=C(\iota)\cap \gH$
or an equal rank enlargement thereof. The latter
 are rather rare, as follows
 from the table in \cite[p. 281]{Wo}.
  For example, up to covers the only equal rank enlargements of  simple Lie groups
  are $\SO(2n)\subset\SO(2n+1)$,  $\SU(3)\subset\gG_2$,
  $\Spin(9)\subset \gF_4$, $\SU(8)/\Z_2\subset \gE_7$,  $\SU(9)/\Z_3\subset \gE_8$ and $\Spin(16)/\Z_2\subset \gE_8$.
 The group $\gS^3\times \gS^3$ has only $\Sp(2)$ as equal rank enlargement whereas
  $\SO(4)$ has $\SO(5)$ and $\gG_2$.

We
 go through the list of simple Lie groups. By passing to a $\Z_2$ quotient if necessary we again can
deal with the group $\SO(k)$ rather than $\Spin(k)$
as long as we allow fundamental group $\Z_2$ for $\gK/\gH$.
As in the even dimensional case we only need to consider this for $k\ge 7$.

\subsection{$\gK=\SU(3)$}

If $\gH=\SO(3)$, then $\gK/\gH$ is isometric to a symmetric space of rank 2, and hence does not have positive curvature, and if $\gH=\SU(2)$, it is a sphere.
Otherwise, $\gH$ is one dimensional and we may assume
$\gH=\diag(z^p,z^q,\bar z^{p+q})$ with $p\ge q\ge 0$ and $\gcd(p,q)=1$.
Then $\gK/\gH$  is an Aloff Wallach space, which has positive curvature unless $(p,q)=(1,0)$.
 In the latter case we choose the involution $\iota=\diag(-1,1,-1)\in\gH$ whose fixed point set $\Fi=\U(2)/\gH=(\Sph^2\times\Sph^1)/\Delta\Z_2$ cannot have positive curvature.

\subsection{$\gK=\SU(4)$}\label{subsec:su4}

There exists an involution $\iota\in \gH$ which is not central since $\rank(\gH)=2$. In this case, $\Fi=\gS(\U(2)\U(2))/\Hi$, which can only have positive curvature
 if $\gH$ contains $\Delta\SU(2)$ or an $\SU(2)$-block. In the first case, $\gK/\gH$ is effectively a quotient of $\SO(6)$. The image of $\gS(\U(2)\U(2))$ in $\SO(6)$
  is $\SO(2)\SO(4)$, and hence the image of $ \Delta\SU(2)$ is a  $3\times 3$ block in $\SO(6)$, and we are done by the Block Lemma.

If $\gH$ contains a block (say lower) $\SU(2)$, there are  4 possible enlargements of rank 2.
If $\gH$ is simple,  it must be $\SU(3)$ or $\Sp(2)$, since $\SU(4)$ does
 not contain a $\gG_2$. But then $\gK/\gH=\Sph^7$ or $\Sph^5$ is in our list.
 A third possibility is that $\gH=\SU(2)\cdot \SU(2)$ (lower and upper $2\times 2$ block), but then
 $\gK/\gH$ is effectively given by $\SO(6)/\SO(4)$ and we are done  by the Block Lemma.

 The final possibility is $\gH=\SU(2)\cdot\diag(z^{2p},z^{2q},\bar z^{p+q},\bar z^{p+q})$
 for some $p,q\in \Z$   with
 $\gcd(p,q)=1$ and this can be ruled out as follows.
 If $|p|\neq |q|$,
 then the two rows in the orthogonal complement $\Lq$ of $\gS(\gU(2)\gU(2))$ correspond to inequivalent
  representations and hence are contained in eigenspaces of $G$ which clearly yields commuting
  eigenvectors.
  If $p=q$ the representation of $\gH$ in $\Lq$ decomposes into
  two equivalent {\em complex}
  representations and the normalizer of $\gH$ contains the upper $2\times 2$ block.
  Hence we can argue as in the proof of the Block Lemma to find commuting eigenvectors.
  If $p=-q$, then the involution $\diag(1,1,-1,-1)\in\gH$
  has a three dimensional fixed point component
  $\gS(\U(2)\U(2))/\gH=(\Sph^2\times\Sph^1)/\Z_2$
  with infinite fundamental group.

\subsection{$\gK=\SU(5)$}
 Let $\gF\cong \Z_2^4$ denote the group of diagonal matrices
 in $\SU(5)$ with eigenvalues  $\pm 1$.
 Since  we can assume that the  three dimensional torus in $\gH$ is diagonal, there exists an index $2$ subgroup $\gE$ of  $\gF$
 contained in $\gH$.
We claim that one element in $\gE$  has an eigenvalue $-1$ with multiplicity $4$.
Suppose not. If  $\iota_1,\iota_2\in \gF$ are two
elements  both of which
have the  eigenvalue $-1$
with multiplicity $4$, then it would follow $\iota_1\cdot \iota_2\in \gE$.
But these products generate the whole group $\gF$ -- a contradiction.

Therefore without loss of generality $\iota=\diag(-1,-1,-1,-1,1)\in \gH$.
By induction $\Fi=\U(4)/\Hi$,  must be one of
 $\U(4)/\U(3)$, $\gU(4)/\SU(4)$ or  $\SU(4)/\Sp(2)$. In the first two cases, $\gH$ contains a $3\times 3$ block,
 and we are done. In the last case,
$\gH$ contains $\Sp(2)\cdot\gS^1$, which gives rise to the positively curved Berger space
$\SU(5)/\Sp(2)\cdot\gS^1$.

\subsection{$\gK=\SU(k), \, k\ge 6$}
 We can assume that the maximal diagonal torus of $\gH$  has at least a one dimensional intersection with the maximal torus
of $\Delta \SU(3)\subset \SU(3)^2$ contained in the upper $6\times 6$ block of
$\SU(k)$.
 Clearly any involution $\iota$ in this intersection has a complex eigenvalue $-1$ with multiplicity $4$.
Then $\Fi=\gS(\U(4)\cdot \U(k-4))/\Hi$.

If $k\ge 7$, then $\Hi$ either contains the upper $4\times 4$ or the lower
$(k-4)\times (k-4)$ block and we are done.

If
$k=6$ then  the stabilizer group of the action of  $\SU(4)$ on $\Fi$ is
$\Sp(2)$ unless it contains an $\SU(3)$-block.
   Thus we may assume $\Sp(2)\SU(2)\cdot\gS^1\subset \gH$ with $\gS^1=\diag(z,z,z,z,\bar z^2,\bar z^2)$.
   This in turn implies that $\iota_2=\diag(i,-i,i,-i,i,-i)\in \Sp(2)\SU(2)\subset \gH$.
   The centralizer $C(\iota_2)$ is isomorphic to $\gS(\gU(3)\gU(3))$ and
   hence Fix$(\iota_2)=\gS(\gU(3)\gU(3))/\gH^{\iota_2}$ has positive curvature.
   By induction $\gH$ contains up to conjugation a $3\times 3$ block and we are done by the Block Lemma.

\subsection{$\gK=\Sp(2)$}\label{subsec:sp2}
There are  three 3-dimensional subgroups of $\Sp(2)$.
One quotient is a sphere $\Sph^7=\Sp(2)/\Sp(1)$, the second the Berger space
$\Sp(2)/\Sp(1)_{max}$ with positive curvature,
 and the third the Stiefel manifold $\Sp(2)/\Delta\Sp(1)=\SO(5)/\SO(3)$ which contains a $3\times3$ block,
and thus is ruled out by the Block Lemma.

It remains to consider
$\gH=\diag(z^p,z^q)$ with $p\ge q\ge 0$ and $\gcd(p,q)=1$. Then the weights of the adjoint action of $\gH$ on
$\Lp$ are $0, 2p,2q,p-q,p+q$. If they are all distinct, the metric $G$ is diagonal,
and there are two commuting eigenvectors.
If $(p,q)=(1,0)$, then $\Fi=\Sp(1)\Sp(1)/\gS^1\times 1$ cannot have positive curvature, where $\iota$ is the involution in $\gH$.

This leaves us with 2 exceptional cases. If $(p,q)=(1,1)$, then $\Sp(2)/\Delta \gS^1=\SO(5)/\SO(2)$
is the Stiefel manifold where we can think of $\SO(2)$ as the lower $2\times 2$ block.
In this case $\Lp=\Lp_0\oplus\Lp_1$ where $\gH$ acts trivially on the 3-dimensional module $\Lp_0$
(upper $3\times 3$-block),
 and as the direct sum of 3 equivalent 2-dimensional representations on $\Lp_1$.
 We use \lref{ev} (b) to find an obstruction. If an eigenvector corresponding to the smallest eigenvalue of $G$  lies in $\Lp_0$,
 then it has rank 2 in $\so(5)$ and  there is a vector in $\Lp_1$ that commutes with it. If the eigenvector lies in $\Lp_1$, we can use the fact that $\SO(5)/\SO(3)\SO(2)$
   is a symmetric space of rank 2 to  find a linearly independent vector in $\Lp_1$ that commutes with it.
   In either case \lref{ev} (b) implies that $\Sp(2)/\gH$ does not have positive curvature.

  Ruling out the
remaining case of $(p,q)=(3,1)$ we postpone to the end since it is
the only case that requires a more detailed argument, see section \ref{subsec:31}.

\subsection{$\gK=\Sp(3)$}\label{subsec:sp3}

If $\iota\in \gH$  is an involution which is not central, then $\Fi=\Sp(2)\Sp(1)/\Hi$ and by induction there are only 4 odd dimensional quotients which have positive curvature, corresponding to $\Hi=\Sp(2), \, \Sp(1)\Delta\Sp(1), \, \Sp(1)\Sp(1)$ or $\Sp(1)_{max} \Sp(1)$.
By the Block Lemma we may assume that $\gH$ does not contain a $2\times 2$ block and thus
$\gH=\Hi$.

If $\Hi$ is given by $\Sp(1)\Delta\Sp(1)=\diag(q,r,r)$ or $\Sp(1)\Sp(1)=\diag(1,q,r)$ with $q,r\in\Sp(1)$,
then we can  choose a second involution
$\iota_2=\diag(1,-1,-1)\in\gH$ with fixed point set $\Sp(1)\Sp(2)/\gH
=\Sp(2)/\Delta\Sp(1)$ in the first case, and $\Sp(1)\Sp(2)/\gH
=\Sp(1)\times(\Sp(2)/\Sp(1)\Sp(1))$ in the second case. Neither one admits positive curvature.

Thus we are left with the case $\Hi=\gH=\Sp(1)_{max} \Sp(1)\subset \Sp(2)\Sp(1)$.
Then $\Lp=\Lp_1\oplus \Lp_2$ where $\Lp_1$ is the irreducible sub-representation of $\gH$ given by
the orthogonal complement of $\Sp(1)_{max}$ in the upper $\symp(2)$-block and $\Lp_2$
is the irreducible inequivalent sub-representation given by $(\symp(2)\oplus \symp(1))^\perp$.
Clearly $\Lp_i$ is contained in an eigenspace of $G$.
We may assume that a maximal torus of $\gH$ is given by $\{\diag(z^3,z,\zeta)\mid z,\zeta\in \gS^1\}$ (see e.g. \cite{Be}, p.237).
 The circle $\diag(z^3,z,1)$ acts on the
Lie algebra of $\Sp(1)_{max}$ with weight 2 and thus $x=\diag(j,0,0)\in \Lp_1$, as $x$ lies in a $2$-dimensional sub-representation of
the circle with weight $6$.
Clearly we can find a commuting eigenvector in $\Lp_2$.

\subsection{$\gK=\Sp(k),\ k\ge 4$}

Fixed point groups of non central involutions are of the form $\Sp(r)\Sp(s)$ with $r+s=k$ and $r\ge s$.
Thus either $r\ge 3$ or $r=s=2$. In either case, it follows from our induction hypothesis that $\Hi\subset \gH$
contains a $2\times 2$ block and we are done.

\subsection{$\gK=\SO(k)$, $k\ge 7$}
We may assume that the maximal torus of $\gH$ has at least a one dimensional intersection with
$\SU(3)\subset \gU(3)$ contained in the upper the $6\times 6$-block.  Any involution
$\iota$ in this intersection has the eigenvalue $-1$ with  real multiplicity four. Hence without
loss of generality
$\iota=\diag(-1,-1,-1,-1,1,\ldots,1)\in \gH$.

A fixed point component is given by $\SO(4)\SO(k-4)/\Hi$.
 If $k\ge 10$, then by induction $\Hi$ either contains
the upper $4\times 4$-block or the lower $(k-4)\times (k-4)$ block and we are done.

If $k=7$, then $\Hi$ contains a $3\times 3$-block unless
 $\Hi_0=\SU(2)_-\cdot\Delta \SU(2)\cong \SO(4)$ with $\Delta \SU(2)\subset\SU(2)_+\cdot\SO(3)$.
If $\gH=\Hi$ then we consider another involution $\iota_2=\diag(-1,-1,1,1,-1,-1,1)\in\gH$ coming from $(\diag(i,-i),\diag(i,-i))\in \SU(2)_-\cdot\Delta \SU(2)$ with fixed point set $\SO(4)\SO(3)/\gH^{\iota_2}$ which cannot have positive curvature since $(\gH^{\iota_2})_0=\U(1)\U(1)$.
Thus $\gH\neq \Hi$ and $\gH$ is simple.
 Since $\SO(5)$ only embeds
 as a $5\times 5$ block into $\SO(7)$,
  it necessarily follows  that $\gH=\gG_2$ and $\SO(7)/\gG_2\cong \RP^7$
  is two fold covered by a listed example.

If $k=8$, then at least two of the four normal connected simple subgroups
of $\SO(4)\SO(4)$ are in $\Hi$. If they form a $4\times 4$ block
then we are done by the Block Lemma. Otherwise, we can choose a suitable
complex structure such that $\Hi$ contains the subgroup $\gL$ given as the upper and lower
$2\times 2$ block of $\SU(4)\subset \SO(8)$.
If we choose an automorphism of the Lie algebra $\so(8)$ that moves the subalgebra $\su(4)$
into a $6\times 6$ block $\so(6)$, then the image of the Lie algebra of $\gL$ will
be a $4\times 4$ block (see first paragraph of section~\ref{subsec:su4}).
By the Block Lemma  $\gH\cong \Spin(7)$ and $\SO(8)/\Spin(7)\cong \RP^7$ is two fold covered by a listed example.

It remains to consider $k=9$.
Then $\Fi=\SO(5)\SO(4)/\Hi$. By the Block Lemma, we can assume that both
$\SO(5)$ and one of the simple factors in $\SO(4)$ must act non trivially on $\Fi$.
This leaves only the possibility that effectively
$$\Fi=\RP^7=\SO(5)\SU(2)_+\bigm/\SU(2)_-\cdot\Delta\SU(2)_+$$
Thus $\Delta\SU(2)_+\subset\gH$ which implies that $\gH$ contains another involution $\iota_2$ with eigenvalue $-1$ of multiplicity 8 and hence has fixed point
component $\SO(8)/\gH^{\iota_2}$. Thus either $\gH$ contains a
$7\times 7$ block and we are done, or $\Fis=\RP^7=\SO(8)/\Spin(7)$. But then $\gH=\Spin(7)$ and
$\gK/\gH=\RP^{15}$ is two fold covered by a listed example.

\subsection{$\gK=\gG_2$}
For every involution $\iota\in \gH$, the fixed point set is $\Fi=\SO(4)/\gH^\iota$.
Since $\iota$ is contained in the center of $\Hi$ we deduce that $\Hi\not\cong \SO(3)$ and thus $\Hi=\gH$
is a normal subgroup of $\SO(4)$.
We let $\Lp_0\subset \Lp$ denote the $3$-dimensional trivial sub-representation of $\gH$
corresponding to the dual normal subgroup of $\SO(4)$.
The orthogonal complement $\Lq: = (\Lp_0)^{\perp}\cap \Lp$ corresponds to the
tangent space of the rank $2$ symmetric space $\gG_2/\SO(4)$.
Thus $G_{|\Lq}$ can not be a multiple of the identity
because otherwise we could find commuting eigenvectors.
This implies that the representation of $\gH\cong \SU(2)$ on the $8$-dimensional space $\Lq$
is reducible. Since $\Ad_{\iota|\Lq}=-\id$ there are no trivial or three dimensional sub-representations and it
 must be given as the sum of two $4$-dimensional sub-representations.

Altogether, $G_{|\Lq}$ has two  four dimensional eigenspaces
$\Lp_1$ and $\Lp_2$ corresponding to distinct eigenvalues $\lambda_1$ and $\lambda_2$.
If $X\in \Lp_1\setminus \{0\}$, then $\ad_X$ moves $\Lp_2$ to $\Lp_0\oplus \Lh$
as $\gG_2/\SO(4)$  is a symmetric space.
Since $\Lp_0$ is three dimensional, $[X,Y]\in \Lh$ for some $Y\in \Lp_2\setminus \{0\}$.
Now $B_+(X,Y)=[GX,Y]-[X,GY]=(\lambda_1-\lambda_2)[X,Y]\in\Lp$ (see proof of Lemma~\ref{ev})
gives $[X,Y]=0$ and we found two commuting eigenvectors.

\subsection{$\gK=\gF_4$}

In this case $C(\iota)$ is  either $\Sp(3)\cdot\Sp(1)$ or $\Spin(9)$. Since $\rank(\gH)=3$,
one of its involutions lies in $\Z_2\oplus\Z_2$, the center of $\Spin(8)\subset\Spin(9)$.
Since $\Sp(3)\Sp(1)$ does not contain
a $\Spin(8)$, the fixed point set is $\Fi=\Spin(9)/\Hi$.
Thus $\Hi=\Spin(7)$ since $\Hi$ has rank 3,
which in turn implies that $\gH=\Spin(7)$ since it has no equal rank enlargement.

Choose an involution $\iota_2\in \Spin(7)$ with $\gH^{\iota_2}=\Spin(4)\Spin(3)$.
Again $C(\iota_2)\cong \Sp(3)\cdot\Sp(1)$ or $\Spin(9)$ and by induction
$C(\iota_2)/\Hi$ does not have positive curvature.

\subsection{$\gK=\gE_6$}
Here
$C(\iota)$ is  either $\SU(6)\cdot\Sp(1)$ or $(\Spin(10)\times\gS^1)/\Delta\Z_4$.
As in the previous case, we can choose an involution
$\iota\in Z(\Spin(8))\cap\gH$ with $\Fi=\Spin(10)\cdot\gS^1/\Hi$.
Thus either $\Spin(9)\cdot\gS^1\subset\gH$ or $\Spin(10)\subset\gH$.
If $\gH$ is a strict equal rank enlargements $\Hi$, then it is isomorphic to
$\Sp(1)\cdot \Spin(9)$ or $\Spin(11)$ and
the central element $\iota\in \Spin(9)$  of $\Hi$ would remain central in $\gH$, which
is impossible as these groups are not in $C(\iota)$.

If $\gH=\Spin(9)\cdot\gS^1 $ we can choose another involution $\iota_2\in  Z(\Spin(8))\cap\gH$ with $\gH^{\iota_2}=
\Spin(8)\cdot\gS^1$. But then  $\Fis=
\Spin(10)\cdot\gS^1/\Spin(8)\cdot\gS^1$ which does not admit positive curvature.

In the second case $\gH=\Spin(10)$,
we can  choose an involution
$\iota_2\in\SU(2)\subset\Spin(4)\subset\Spin(10)$ with $\gH^{\iota_2}=\Spin(4)\Spin(6)$ and
as before $C(\iota_2)\cong \Spin(10)\cdot\gS^1$ or $\SU(6)\cdot\Sp(1)$
but in either case $C(\iota_2)/\gH^{\iota_2}$ can not have positive curvature by our induction hypothesis.

\subsection{$\gK=\gE_7$} We let $\iota$ denote an involution in $\gH$ which is not central in $\gE_7$.
The potential candidates for $C(\iota)$ are $\gE_6\cdot\gS^1$,  $(\Spin(12)\times\Sp(1))/\Delta\Z_2$ or $\SU(8)/\Z_2$.
Notice that $\iota$ must be contained in the center of $C(\iota)$.
The center of $\gE_6\cdot \gS^1$ and $\SU(8)/\Z_2$ only contains one involution and this must be contained in the center of
$\gE_7$ which is $\Z_2$ -- a contradiction.

Thus $C(\iota)=(\Spin(12)\times\Sp(1))/\Delta\Z_2$.
Then $\Hi=\Spin(11)\cdot\Sp(1)=\gH$ or $\Hi=\Spin(12)=\gH$.
But then we can choose another involution $\iota_2\in Z(\Spin(4))\subset \Spin(11)\cap\gH$ with
$\gH^{\iota_2}= \Spin(4)\Spin(7)\Sp(1)$ or $\Spin(4)\Spin(8)$.
As before $C(\iota_2)\cong\Spin(12)\cdot\Sp(1)$ and
the fixed point set does not have positive curvature.

\subsection{$\gK=\gE_8$}
 Here $C(\iota)$ is either $\gE_7\cdot\Sp(1)$ or $\SO'(16)$.
In the former case, we would get $\gH=\gE_7$ and there is
a Riemannnian submersion from $\gE_8/\gE_7$ to the higher rank symmetric space $\gE_8/\gE_7\cdot \Sp(1)$
as the isotropy representation of $\gE_8/\gE_7 \Sp(1)$ remains irreducible when restricted to $\gE_7$ -- a contradiction.

Thus
$\Fi=\SO'(16)/\Hi$ and hence $\Hi=\Spin(15)=\gH$.  We can now choose another involution
$\iota_2\in Z(\Spin(12))\subset \gH$ with $\gH^{\iota_2}=\Spin(12)\Spin(3)$
As before we must have $C(\iota_2)\cong \SO'(16)$ but then $C(\iota_2)/\gH^{\iota_2}$
does not have positive curvature.

\bigskip

\subsection{$\gK=\Sp(2)$ and $\gH= \diag(e^{i\theta},e^{3i\theta})$}\label{subsec:31}
Finally, we discuss the example left out in Section~\ref{subsec:sp2}.\\[2ex]
{\bf Claim.} Without loss of generality $G$ commutes
with $\Ad_a$ where $a=\diag(j,j)$.\\[2ex]
We first want to explain why it is enough to prove the claim.
By the claim it suffices  to consider $\Ad_{\hat{\gH}}$-invariant metrics
with \[\hat{\gH}:= \gH\cup a\gH\cong \mathsf{Pin}(2).\]
Of course any such metric descends via the two fold cover
$\gK/\gH\rightarrow \gK/\hat{\gH}$.
Notice that $\Ad_{a|\Lp}$ has a negative determinant
and thus right multiplication with $a$ induces an
orientation reversing isometry of $\Sp(2)/\gH$ and
$\Sp(2)/\hat{\gH}$ is a non-orientable manifold.
On the other hand, we know that a positively curved odd dimensional manifold is orientable by
 the Synge Lemma.\\[1ex]

It remains to verify the claim. The element $b=\diag(e^{i\psi},e^{i\psi})$ is in the normalizer of $\gH$.
Therefore the
isometry type  does not change if we replace
$G$ by $G_b=\Ad_bG\Ad_{b^{-1}}$ to define an induced metric on $\Sp(2)/\gH$.
We plan to show that for a suitable choice of $b$ the endomorphisms $G_b$ and $\Ad_a$ commute.

Consider the isotropy decomposition
$\Lp=\Lp_0\oplus\Lp_2\oplus\Lp_4\oplus\Lp_6$  preserved by $G$, where $\gH$
acts trivially on the 1-dimensional space $\Lp_0$ and with weight $n$ on $\Lp_n$, $n=2,4,6$.
These subspaces can be described explicitly as follows: $\Lp_0=\R\diag(-3i,i)$,
$\Lp_6=\C\cdot \diag(0,j)$,
$$
\Lp_2=\left\{\left(
         \begin{array}{cc}
           w & -\bar z \\
           z & 0 \\
         \end{array}
       \right) \biggm| w\in \C\cdot j,\ z\in\C
       \right\}\mbox{ and }\quad
\Lp_4=\left\{\left(
         \begin{array}{cc}
           0 & w \\
           w & 0 \\
         \end{array}
       \right) \biggm| w\in \C\cdot j
       \right\}.
$$
In particular, $\Lp_2$ is 4-dimensional, whereas $\Lp_4$ and $\Lp_6$ are 2-dimensional.
Notice that each of these spaces is also $\Ad_a$ invariant.
Since $G_b$ restricts to a multiple of the identity on $\Lp_i$ for $i=0,4,6$, it remains
to show that $G_{b|\Lp_2}$ commutes with $\Ad_{a|\Lp_2}$ for a suitable choice of $b$.

The action of $\gH$ induces a natural complex structure on $\Lp_2$
and we can view $G_{|\Lp_2}$ as hermitian endomorphism with respect to this complex
structure and the scalar product $Q$. The element $\Ad_{a|\Lp_2}$ corresponds to complex conjugation if
we identify the real vector subspace
\[W=\left\{\left(
         \begin{array}{cc}
           \beta j & -\delta\\
           \delta & 0 \\
         \end{array}
       \right) \biggm| \delta,\beta\in \R
       \right\}.\]
 of $\Lp_2$ with $\R^2$. Hence if we consider the hermitian $2\times 2$
 matrix representing $G_{|\Lp_2}$ with respect to an orthonormal basis of $W$, then
 $G$ commutes with $\Ad_a$ if this matrix is real.
 If we replace $G$ by $G_{b}$ the corresponding
 $2\times 2$ matrix changes by conjugating it with
 $\diag(e^{i2\psi},1)$  and clearly we can turn the given
 hermitian matrix into a symmetric real matrix for a suitable choice of $\psi$.

\section*{Final Remarks}
An analysis of the proof shows:
If a simply connected compact homogeneous space
$\gK/\gH$ satisfies the conclusion of the Berger Lemma, but does not admit an invariant metric
of positive sectional curvature
then either one can find commuting eigenvectors
or $\gK/\gH$ is given by $\Sp(2)/\diag(z^3,z)$, $\Sp(2)/\diag(z,z)$
or $\Sp(3)/\{\diag(z,z,g)\mid z\in \gS^1,g\in \gS^3\}$.
The first space is ruled out in section~\ref{subsec:31}. It is also
 the most difficult case in \cite{BB} where one finds another proof that it does not admit positive curvature,
 by exhibiting two commuting vectors (not necessarily eigenvectors) with 0 curvature.
 The third space contains the second one as a totally geodesic submanifold.
 It was pointed out by M. Xu and J. Wolf, that
 B\'erard Bergery did not consider the most general class of metrics on the second space
 when he tried to rule out positive curvature. They also show that one can find metrics on it
 where all planes spanned by commuting vectors have positive curvature.
 We recall that we rule out this potential example  in section~\ref{subsec:sp2} using part b) of Lemma~\ref{ev}.

\providecommand{\bysame}{\leavevmode\hbox
to3em{\hrulefill}\thinspace}

\end{document}